\def\sqr#1#2{{\vcenter{\vbox{\hrule height.#2pt
              \hbox{\vrule width.#2pt height#1pt \kern#1pt \vrule width.#2pt}
              \hrule height.#2pt}}}}
\def\3n{\negthinspace \negthinspace \negthinspace }
\def\2n{\negthinspace \negthinspace }
\def\1n{\negthinspace }
\def\dbA{\mathbb{A}}
\def\dbB{\mathbb{B}}
\def\dbC{\mathbb{C}}
\def\dbE{\mathbb{E}}
\def\dbF{\mathbb{F}}
\def\dbG{\mathbb{G}}
\def\dbH{\mathbb{H}}
\def\dbP{\mathbb{P}}
\def\dbR{\mathbb{R}}
\def\dbS{\mathbb{S}}
\def\dbY{\mathbb{Y}}
\def\dbZ{\mathbb{Z}}
\def\sA{\mathscr{A}}
\def\sB{\mathscr{B}}
\def\sC{\mathscr{C}}
\def\sD{\mathscr{D}}
\def\sG{\mathscr{G}}
\def\sH{\mathscr{H}}
\def\sM{\mathscr{M}}
\def\sN{\mathscr{N}}
\def\sP{\mathscr{P}}
\def\sQ{\mathscr{Q}}
\def\sR{\mathscr{R}}
\def\sX{\mathscr{X}}
\def\sY{\mathscr{Y}}
\def\sZ{\mathscr{Z}}
\def\={\buildrel \triangle \over =}
\def\ds{\displaystyle}
\def\ns{\noalign{\ss}}
\def\g{\gamma}
\def\d{\delta}
\def\e{\varepsilon}
\def\si{\sigma}
\def\f{\varphi}
\def\th{\theta}
\def\Th{\Theta}
\def\L{\Lambda}
\def\cA{{\cal A}}
\def\cC{{\cal C}}
\def\cF{{\cal F}}
\def\cG{{\cal G}}
\def\cH{{\cal H}}
\def\cK{{\cal K}}
\def\cL{{\cal L}}
\def\cM{{\cal M}}
\def\cN{{\cal N}}
\def\cP{{\cal P}}
\def\cR{{\cal R}}
\def\cX{{\cal X}}
\def\cY{{\cal Y}}
\def\cZ{{\cal Z}}
\def\ss{\smallskip}
\def\ms{\medskip}
\def\q{\quad}
\def\qq{\qquad}
\def\hb{\hbox}
\def\lan{\mathop{\langle}}
\def\ran{\mathop{\rangle}}
\def\h{\widehat}
\def\wt{\widetilde}
\def\cd{\cdot}
\def\({\Big (}
\def\){\Big )}
\def\[{\Big[}
\def\]{\Big]}
\def\bde{\begin{definition}}
\def\ede{\end{definition}}
\def\be{\begin{equation}}
\def\bel{\begin{equation}\label}
\def\ee{\end{equation}}
\def\bt{\begin{theorem}}
\def\et{\end{theorem}}
\def\bc{\begin{corollary}}
\def\ec{\end{corollary}}
\def\bl{\begin{lemma}}
\def\el{\end{lemma}}
\def\bp{\begin{proposition}}
\def\ep{\end{proposition}}
\def\bas{\begin{assumption}}
\def\eas{\end{assumption}}
\def\br{\begin{remark}}
\def\er{\end{remark}}
\def\ba{\begin{array}}
\def\ea{\end{array}}
\def\ed{\end{document}}
\def\square#1{\vbox{\hrule\hbox{\vrule height#1%
     \kern#1\vrule}\hrule}}
\def\rectangle#1#2{\vbox{\hrule\hbox{\vrule height#1%
     \kern#2\vrule}\hrule}}
\font\tenbb=msbm10 \font\sevenbb=msbm7 \font\fivebb=msbm5
\newtheorem{lemma}{Lemma}[section]
\newtheorem{remark}{Remark}[section]
\newtheorem{theorem}{Theorem}[section]
\newtheorem{corollary}{Corollary}[section]
\newtheorem{definition}{Definition}[section]
\newtheorem{proposition}{Proposition}[section]
\newtheorem{assumption}{Assumption}[section]
\begin{document}
 \title{\bf Characterizations of equilibrium controls in time inconsistent mean-field stochastic linear quadratic
problems. I \footnote{The research was supported by the NSF of China under grant 11231007, 11401404 and 11471231.}}

\author{Tianxiao Wang \footnote{School of
Mathematics, Sichuan University, Chengdu, P. R. China. Email:wtxiao2014@scu.edu.cn.}}

\maketitle

\begin{abstract}
In this paper, a class of time inconsistent linear quadratic optimal control problems of mean-field stochastic differential equations (SDEs) is considered under Markovian framework. Open-loop equilibrium controls and their particular closed-loop representations are introduced and characterized via variational ideas. Several interesting features are revealed and a system of coupled Riccati equations is derived. In contrast with the analogue optimal control problems of SDEs, the mean-field terms in state equation, which is another reason of time inconsistency, prompts us to define above two notions in new manners. An interesting result, which is almost trivial in the counterpart problems of SDEs, is given and plays significant role in the previous characterizations. As application, the uniqueness of open-loop equilibrium controls is discussed.
\end{abstract}

\bf Keywords. \rm Mean-field linear quadratic optimal control problems, time
inconsistency, equilibrium controls, system of Riccati equations.

\ms


\section{Introduction}

Suppose $(\Omega,\cF,\dbP,\dbF)$ is a complete filtered probability space,
$W(\cd)$ is a one-dimensional standard Brownian motion with natural
filtration $\dbF\equiv\{\cF_t\}_{t\ge0}$ augmented by all $\dbP$-null sets.
For any $t\in[0,T)$, we consider the
following stochastic differential equation (SDE):
\bel{state-equation-original}\left\{\2n\ba{ll}
\ns\ds
dX(s)=\big[A(s)X(s)+B(s)u(s)  \big]ds +\big[C(s) X(s) +D(s)  u(s) \big]dW(s),\q
s\in[t,T],\\
\ns\ds X(t)=\xi.\ea\right.\ee
Here $A,B,C,D$ are suitable matrix-valued
(deterministic) functions, $X(\cd)$, $u(\cd)$, $(t,\xi)\in\sD$ is called the {\it state process}, {\it control process}, {\it initial pair}, respectively, where
$\sD:=\Big\{(t,\xi)\bigm|t\in[0,T],~\xi\hb{ is $\cF_t$-measurable, }\dbE|\xi|^2<\infty\Big\}.$
Under some mild
conditions, for any $(t,\xi)$
and control $u(\cd)$, (\ref{state-equation-original})
admits a unique solution $X(\cd)=X(\cd\,;t,\xi,u(\cd))$. The classical stochastic linear quadratic optimal control problems is to find suitable $\bar u(\cd)=\bar u(
\cd;t,\xi)$ to minimize the following cost functional
\bel{cost-original-classical-static}\ba{ll}
\ns\ds J(u(\cd);t,\xi)={1\over2}\dbE_t\Big\{\int_t^T\big[\lan
Q(s)X(s),X(s)\ran +\lan
R(s)u(s),u(s)\ran\big]ds +\lan GX(T),X(T)\ran\Big\},\ea\ee
where $Q,  R,G$ are suitable matrix-valued
(deterministic) functions, $\dbE_t(\cd):=\dbE[\,\cd\,|\cF_t]$ stands for conditional expectation operator. For the optimal control, we observe that one fundamental property is the time consistency, i.e., for optimal control $\bar u(\cd)\equiv \bar u(\cd;t,\bar X(t))$, one has $\bar u(s;t_1,\bar X(t_1))=\bar u(s;t_2,\bar X(t_2))$ with $t\leq t_1\leq t_2\leq s\leq T$, $X(t_2)=X(t_2;t_1,\bar X(t_1),\bar u(\cd))$.

Inspired by the formulation of mean-variance portfolio selection problems, it is reasonable to keep the state process of above optimal control problem stable with respect to possible variation of random factors. One effective way is to add the variation of $X(\cd)$, i.e.
$$\dbE_t\big[X(s)-\dbE_t X(s)\big]^2=\dbE_t |X(s)|^2-\big[\dbE_t X(s)\big]^2,\ \ s\in(t,T].
$$
into the cost functional, and we end up with the form of
\bel{ }\ba{ll}
\ns\ds J( u(\cd);t,\xi)={\,1\over2}\,\dbE_t\Big\{\int_t^T\big[ \lan
\wt Q(s) \dbE_tX(s) ,\dbE_tX(s) \ran+\lan
Q(s)X(s),X(s)\ran +
\lan R(s) u(s) ,u(s)\ran\big]ds \\
\ns\ds\qq\qq\qq\qq  +\lan GX(T),X(T)\ran+\lan\wt
G\dbE_t[X(T)],\dbE_t[X(T)]\ran\Big\}.\ea\ee
Under proper conditions, optimal control of the form $\bar u=\Th_1 \bar X+\Th_2\dbE_t\bar X$ exists with appropriate $\Th_i$, see e.g. Section 3 of \cite{Yong-2017}. Plugging it into (\ref{state-equation-original}), we arrive at one conditional mean-field SDEs for optimal state $\bar X$,
\bel{state-equation-2}\left\{\2n\ba{ll}
\ns\ds
d\bar X(s)=\big[A_1(s)\bar X(s)  +A_2(s) \dbE_t \bar X(s)  \big]ds +\big[A_3(s) \bar X(s) +A_4(s)\dbE_t \bar X(s)\big]dW(s),\q
s\in[t,T],\\
\ns\ds X(t)=\xi,\ea\right.\ee
with proper $A_i$. To give a unified treatment on both (\ref{state-equation-original}) and (\ref{state-equation-2}), for $t\in[0,T)$, we propose the following controlled mean-field SDE,
\bel{standard-state-equation}\left\{\2n\ba{ll}
\ns\ds
dX\!=\!\big[A X \! +\!\wt A \dbE_t X \!+\!B  u\!+\!\wt B \dbE_t u \!+\!b \big]ds \!+\!\big[C X \!+\!\wt C \dbE_t X
\!+D u\!  +\!\wt D \dbE_t u\!+\!\si \big]dW(s),\q
s\in[t,T],\\
\ns\ds
dX\!=\!\big[(A+\wt A) X \!  +(\!B +\wt B) u\!+\!b \big]ds\!+\!\big[(C+\wt C)X \!+(D+\wt D) u\!+\!\si \big]dW(s),\q
s\in[0,t],\ \ t\in[0,T),\\
\ns\ds X(0)=x.\ea\right.\ee
Here and after, the time reference may be omitted for simplicity. The solvability of (\ref{standard-state-equation}) is easy to see if moreover, $\wt A, \wt B, \wt C, \wt D$ are bounded and deterministic, $b,\si$ are proper processes. We also consider the following quadratic cost functional
\bel{cost-functional-1}\ba{ll}
\ns\ds J(u(\cd);t,X(t))={\,1\over2}\,\dbE_t\Big\{\int_t^T\big[ \lan
\wt Q \dbE_tX ,\dbE_tX \ran+
\lan \wt R \dbE_tu,\dbE_tu\ran+\lan
QX,X\ran +
\lan R u ,u\ran\big]ds\\
\ns\ds\qq\qq\qq\q +\lan GX(T),X(T)\ran +\lan\wt
G\dbE_t[X(T)],\dbE_t[X(T)]\ran\Big\}+\lan \gamma_1 X(t)+\gamma_2,\dbE_t X(T)\ran,
\ea\ee
which is obviously well-defined. Our linear quadratic optimal control problem can be stated as follows.

\ms

\bf Problem (LQ). \rm For any given $(t,X(t))\in\sD$, to find $\bar u(\cd)\in L^2_{\dbF}(0,T;\dbR^m)$ such that
\bel{1.3}J(\bar
u\big|_{[t,T]}(\cd);t,X(t))=\inf_{u(\cd)\in L^2_{\dbF}(t,T;\dbR^m)}J(u(\cd);t,X(t)).\ee

If $t=0$, Problem (LQ) was studied in \cite{Yong-2013-SICON}, (see also \cite{Buckdahn-Djehiche-Li-2011}, \cite{Li-Automatic}, \cite{Li-Sun-Yong-2017}) and the optimal control exists under proper conditions. Returning back to above $dynamic$ setting, any optimal control $\bar u(\cd) $ associated with $(t,X(t))$ satisfying (\ref{1.3})
will depend on $t$ and demonstrate the time-inconsistency
property, i.e. $\bar u(s;t_1,\bar X(t_1))\neq\bar u(s;t_2,\bar X(t_2))$ for some $(t_1,t_2,s)$ with $t\leq t_1\leq t_2\leq s\leq T.$ In other words, to solve Problem (LQ), one has to make the choice between ``optimality'' and ``time consistency''.
In most existing papers along this line, the time consistency was kept, and the traditional closed-loop optimal controls, open-loop optimal controls were replaced by closed-loop equilibrium controls, open-loop equilibrium controls, respectively.
As to closed-loop equilibrium controls, we refer the reader to e.g., \cite{Bjork-Murgoci-2017}, \cite{Wang-Wu-2016}, \cite{Yong-2011-MCRF}, \cite{Yong-2017}, where some delicate convergence arguments from discrete time to continuous case were used, and \cite{Huang-Li-Wang-2017}, \cite{Wang-2017-JDE}, where a new approach based on variational ideas were developed without convergence procedures.
We also refer to \cite{Ekeland-Mbodji-Pirvu-2012}, \cite{Ekeland-Pirvu-2008} for the corresponding study of investment and consumption problems with non-exponential discounting.
On the other hand, there were also many articles on open-loop equilibrium controls, see e.g., \cite{Hu-Jin-Zhou-2012}, \cite{Hu-Jin-Zhou-2017}, \cite{Wang-Wu-2015}, \cite{Wei-Wang-2017}, \cite{Yong-2017}, and so on.
We point out that almost all the previous literature on time inconsistent stochastic linear quadratic problems focused on the particular case of $\wt A=\wt B=\wt C=\wt D=0$, except \cite{Yong-2017} where the closed-loop equilibrium controls of Problem (LQ) were introduced and studied via multi-person differential games approach.
To our best, the investigation on open-loop equilibrium controls of Problem (LQ) is still open. To fill this gap, in this paper we introduce two notions, i.e., open-loop equilibrium controls and their closed-loop representations, of Problem (LQ), and establish their characterizations by the variational ideas in \cite{Huang-Li-Wang-2017}, \cite{Wang-2017-JDE}. As application, we discuss the uniqueness of open-loop equilibrium controls.

There are several essential differences between the existing papers and ours.
In contrast with \cite{Hu-Jin-Zhou-2012}, \cite{Hu-Jin-Zhou-2017}, \cite{Wang-Wu-2015}, \cite{Wang-2017-JDE}, \cite{Yong-2017}, our state equation is a general conditional mean-field SDE. The additional mean-field terms is the second reason of time inconsistency, and requires us to propose new definitions of equilibrium controls and new mathematical tricks, see e.g. Lemma \ref{Lemma-equality}.
Even under the particular SDEs case, our obtained second-order equilibrium conditions did not appear in \cite{Hu-Jin-Zhou-2012}, \cite{Hu-Jin-Zhou-2017}, \cite{Yong-2017}, \cite{Alia-Chighoub-Sohail-2016}. For the proof of uniqueness of open-loop equilibrium controls, our result extends the counterparts in \cite{Hu-Jin-Zhou-2017}, and our procedures are different from theirs as well.
We emphasize that the characterization viewpoint on time inconsistent stochastic linear quadratic problem were also used in other specific/different frameworks, such as \cite{Djehiche-Huang-2016}, \cite{Hu-Jin-Zhou-2017}, \cite{Huang-Li-Wang-2017}, \cite{Wang-Wu-2015}, \cite{Wang-2017-JDE}.
At last, by our study we also find the following interesting facts:

$\bullet$ The open-loop equilibrium controls are characterized by two kinds of conditions: $first$-$order$, $second$-$order$ $equilibrium$ $conditions$, which is comparable with the $first$-$order$, $second$-$order$ $necessary$ $optimality$ $conditions$ in traditional optimal control problems.

$\bullet$ The second-order equilibrium condition is the same as the second-order optimality condition of mean-field SDEs, and it appears in both open-loop equilibrium controls and their closed-loop representations.

$\bullet$ As to the closed-loop representations of open-loop equilibrium controls, the first-order equilibrium condition includes a system of Riccati equations, which appears for the first time and are essentially different from that of closed-loop equilibrium controls in \cite{Yong-2017}.

The article is organized as follows. In Section 2, we introduce some useful spaces, as well the notions of open-loop equilibrium controls, and their closed-loop representations. In Section 3, we characterize both notions by variational approach. In Section 4, we discuss the uniqueness of open-loop equilibrium controls under proper conditions. Section 5 concludes this paper.

\section{Preliminaries}

We first introduce the following hypotheses.

\ms

\bf (H1) \rm For the coefficients in (\ref{standard-state-equation}) and (\ref{cost-functional-1}), suppose
$$\ba{ll}
\ns\ds  A(\cd),\wt A(\cd), C(\cd),\wt C(\cd)\in  L^{\infty}(0,T;\dbR^{n\times n}),\ \ B(\cd),\wt B(\cd),D(\cd),\wt D(\cd)\in  L^{\infty}(0,T;\dbR^{n\times
m}),\\
\ns\ds  Q(\cd),\wt Q(\cd) \in  L^{\infty}(0,T;\dbS^{n\times n}),\ \ G,\wt G\in\dbS^{n\times n},\ \ \gamma_1\in\dbR,\ \gamma_2\in\dbR^n,\ \   \\
\ns\ds b(\cd)\in L^2(\Omega;L^1(0,T;\dbR^{n})), \ \  \si(\cd)\in L^2_{\dbF}(0,T;\dbR^n),\ \ R(\cd),\wt R(\cd)\in  L^{\infty}(0,T;\dbS^{m\times
m}).
\ea
$$
Here $\dbS^{m\times m}$ is the set of symmetric $m\times m$ matrices. For $0\leq s\leq t\leq T$, $H:=\dbR^n,\dbR^{n\times n},$ etc, we define the following spaces. $L^2_{\cF_t}(\Omega;H)$ is the set of $\cF_t$-measurable random variables $X:\Omega\rightarrow H$ such that $\dbE|X|^2<\infty$;
$L^{\infty}(s,t; H)$ is the set of deterministic, measurable, essentially bounded functions $ X:[s,t]\to H$; $L^2_{\dbF}(\Omega;L^1(s,t;H))$ is the set of $\dbF$-adapted, measurable processes $X : [s,t]\times\Omega\rightarrow
H$ such that $\dbE\Big[ \int_s^t|X(r)|dr\Big]^2<\infty$; $L^2_{\dbF}(s,t;H) $ is the set of $\dbF$-adapted, measurable processes $X : [s,t]\times\Omega\rightarrow
H$ such that $\dbE  \int_s^t|X(r)|^2dr  <\infty$; $L^2_{\dbF}(\Omega;C([s,t];H))$ is the set of $\dbF$-adapted, measurable, continuous processes $X : [s,t]\times\Omega\rightarrow
H$ such that $ \dbE\sup\limits_{r\in[s,t]}|X(r)|^2<\infty$; $C_{\dbF}([s,t];L^2(\Omega;\dbR^n))$ is the set of $\dbF$-adapted, measurable process $X:[s,t]\times\Omega\rightarrow H$ such that $r\mapsto X(\cd,r)$ is continuous and $\sup\limits_{r\in[s,t]}\dbE|X(r)|^2<\infty$.

In the following, let $K$ be a generic constant which varies in different context, and
\bel{Simplified-notations-1}\ba{ll}
\ns\ds \sA:= A+\wt A, \ \ \sB:= B+\wt B,\ \ \sC:= C+\wt C,\ \ \sD:= D+\wt D,\\
\ns\ds \sR:= R+\wt R,\ \ \sQ:=Q+\wt Q, \ \ \sG:=G+\wt G.
\ea\ee

\ms

If the state equation is a particular controlled SDE, we can use similar form of SDE to describe the $equilibrium$ $state$ $process$.  However, since the increment of state process $X$ in (\ref{standard-state-equation}) has the reliance on additional time reference $t$, the value of $X$ at time $s>t$ also depends on $t$. As a result, we need to propose an alternative kind of process as the equilibrium state process.

To get some inspirations from existing papers, we first look at one linear quadratic problem associated with state equation
\bel{Deterministic-ODE-1}\left\{\ba{ll}
\ns\ds dX(s)=\big[A(t,s)X(s)+B(t,s)u(s)\big]ds, \ \ s\in[t,T],\\
\ns\ds  X(t)=x,
\ea\right.
\ee
and cost functional
$$\ba{ll}
\ns\ds J(u(\cd);t,x)=\dbE_t\int_t^T \big[\lan Q(t,s)X(s),X(s)\ran +\lan R(t,s)u(s),u(s)\ran \big]ds+\dbE_t\lan G(t)X(T),X(T)\ran.
\ea
$$
Here the increment of state variable in (\ref{Deterministic-ODE-1}) also relies on initial time $t$, and discounting functions $Q,$ $R$ are not necessary to be exponential form. Both facts naturally lead to the time inconsistency of optimal control. According to \cite{Yong-2011-MCRF}, \cite{Yong-2010-math-sinica}, the equilibrium control $\bar u(\cd)$ not only relies on $\big\{s\in[0,T], Q(s,s), \ R(s,s)\big\}$, but also on equilibrium state $\bar X(\cd)$ define by
\bel{equilibrium-state-example}\left\{\ba{ll}
\ns\ds d\bar X(s)=\big[A(s,s)\bar X(s)+B(s,s)\bar u(s)\big]ds, \ \ s\in[t,T],\\
\ns\ds  \bar X(t)=x.
\ea\right.
\ee
In other words, both equilibrium control $\bar u$ and the equilibrium state $\bar X$ depends on the diagonal value (i.e., $t=s$) of coefficients $Q, R, A, B$.
Similar phenomenon also happens in investment and consumption problems with power-type utilities and general non-exponential discounting, see Section 6.2 of \cite{Yong-2012-2}.

We return back to our state equation (\ref{standard-state-equation}) again. Following the same principle as above (\ref{equilibrium-state-example}), it is expected that the corresponding equilibrium state process, denoted by $\sX^*(\cd)$, should satisfy %
\bel{Equilibrium-state-equation-general}\left\{\2n\ba{ll}
\ns\ds
d\sX^* =\Big[\sA\sX^*+\sB u^*+b\Big]ds +\Big[ \sC \sX^*+\sD u^*+\si\Big]dW(s),\q
s\in[0,T],\\
\ns\ds \sX^*(0)=x.\ea\right.\ee
with notations in (\ref{Simplified-notations-1}) and the equilibrium control $u^*(\cd)$. Keeping above arguments in mind, we introduce the following notion.

\ms

\begin{definition}\label{Definition-1}
Given initial state $x\in\dbR^n$, process $ u^*(\cd)\in L^2_{\dbF}(0,T;\dbR^m)$
is called an {\it open-loop equilibrium control} if for any
$t\in[0,T)$, small $\e>0$, $v\in L^2_{\cF_t}(\Omega;\dbR^m)$,
\bel{optimal-open}\lim_{\overline{\e\to0}}
{J(u^{v,\e}(\cd);t,\sX^*(t))-J\big(u^*(\cd)\big|_{[t,T]};t,\sX^*(t)\big)
\over\e}\ge0,\ee
where $\sX^*(\cd)$ satisfies (\ref{Equilibrium-state-equation-general}), and
$u^{v,\e}(s)=u^*(s)+vI_{[t,t+\e)}(s)$.
\end{definition}
If there is no mean-field term in (\ref{standard-state-equation}), then $X(\cd)$ only depends on $(x,u(\cd))$ and $\sX\equiv X$. Moreover, our definition reduces to the one in e.g., \cite{Hu-Jin-Zhou-2012}, \cite{Hu-Jin-Zhou-2017}.

We also introduce the closed-loop representation of open-loop equilibrium control $u^*(\cd)$.
\begin{definition}\label{Definition-2}
An open-loop equilibrium control $u^*(\cd)$ associated with initial state $x\in\dbR^n$ is said to have a closed-loop representation if $u^*(\cd)=\Th^*(\cd)\sX^*(\cd)+\f^*(\cd)$ where $(\Th^*(\cd),\f^*(\cd))\in L^2(0,T;\dbR^{m\times n})\times L^2_{\dbF}(0,T;\dbR^{m})$ is independent of $x$, $\sX^*(\cd)$ is the solution of
\bel{Closed-loop-representation-state-equation}\left\{\2n\ba{ll}
\ns\ds
d\sX^* =\Big[(\sA +\sB \Th^* )\sX^* +\sB \f^*  +b \Big]ds +\Big[(\sC +\sD \Th^* )\sX^* +\sD  \f^*  +\si \Big]dW(s),\q
s\in[0,T],\\
\ns\ds \sX^*(0)=x.\ea\right.\ee
\end{definition}
We emphasize that the structure of equilibrium state equation (\ref{Closed-loop-representation-state-equation}) is the same as that in \cite{Yong-2017} where the closed-loop equilibrium control of Problem (LQ) was formulated and investigated via multi-person differential games ideas.

\section{Characterizations of equilibrium controls}

In this section, we give the characterizations and explicit representations of open-loop equilibrium controls in the sense of Definition \ref{Definition-1}, \ref{Definition-2}.

\subsection{Some useful lemmas}

Given $u(\cd)\in L^2_{\dbF}(0,T;\dbR^m)$, $t\in[0,T]$, for later convenience we rewrite the state equation as follows
\bel{1.1}\left\{\2n\ba{ll}
\ns\ds
dX=\big[A  X  +\wt A  \dbE_tX +B u  +\wt B \dbE_tu +b \big]ds +\big[CX+\wt C  \dbE_tX %
+D u +\wt D  \dbE_tu  +\si \big]dW(s),\ \  s\in[0,T],\\
\ns\ds X(0)=x.\ea\right.\ee
We introduce the following BSDEs to deal with the quadratic cost functional (\ref{cost-functional-1}),
\bel{MF-BSDEs-*-1}\left\{ \ba{ll}
\ns\ds dY  =
- \Big[A ^{\top}Y  +\wt A ^{\top}\dbE_t Y  + C ^{\top} Z  + \wt C ^{\top} \dbE_t Z  -Q X -\wt Q\dbE_t X\Big]ds+Z  dW(s),\ \ s\geq t,\\
\ns\ds Y(T,t)=-G X (T)-\wt G \dbE_tX (T)-\g_2,\\
\ns\ds dY^{v,\e}_1 \!=-\Big[\!  A  ^{\top} Y^{v,\e}_1  +\wt A  ^{\top} \dbE_t Y^{v,\e}_1 \!+\wt C  ^{\top}\dbE_t Z^{v,\e}_1  + C ^{\top}
Z^{v,\e}_1 -\frac 1 2 Q X^{v,\e}_1 -\frac 1 2 \wt Q \dbE_tX^{v,\e}_1 \Big]ds +Z^{v,\e}_1 dW(s),\\
\ns\ds  Y^{v,\e}_1(T,t)=\!-\frac 1 2 \big[GX^{v,\e}_1(T)+\wt
G\dbE_tX^{v,\e}_1(T)\big],\!
\ea\right.\ee
where for $s\in[t,T]$, $X^{v,\e}_1(s):=X^{v,\e}_0(s)-X(s)$ satisfies
\bel{3.8} \ba{ll}
\left\{\2n\ba{ll}
\ns\ds
dX^{v,\e}_1 =\[A X^{v,\e}_1 +\wt A \dbE_t X^{v,\e}_1 +B \big[u^{v,\e}_0 -u \big]
+\wt B \dbE_t\big[u^{v,\e}_0 -u  \big]  \]ds\\
\ns\ds\qq\qq\ +\[C X^{v,\e}_1 +\wt C \dbE_tX^{v,\e}_1  +D \big[u^{v,\e}_0 -u \big]
+\wt D(s)\dbE_t\big[u^{v,\e}_0(s)-u (s)\big] \]dW(s),
\\
\ns\ds X^{v,\e}_1(t)=0,
\ea\right.
\ea\ee
and
$u^{v,\e}_0(\cd):=u(\cd)+vI_{[t,t+\e)}(\cd),$ $v\in L^2_{\cF_t}(\Omega;\dbR^m).$
We also define $Y_0(\cd)$ as
\bel{Definition-of-Y-0}\left\{\ba{ll}
\ns\ds dY_0 =- \Big[A ^{\top}+\wt A ^{\top}\Big] Y_0 ds,\ \ s\in[0,T],\\
\ns\ds Y_0(T)=-I_{n\times n}.
\ea\right.\ee
\begin{remark}
As to $X^{v,\e}_1(\cd)$, by some standard calculations one has
\bel{Integrability-for-X-1}\ba{ll}
\ns\ds
\dbE_t\Big[\sup\limits_{_{t\in[t,t+\e]}}|X^{v,\e}_1(s)|^p\Big] \leq
K \[\int_t^{t+\e}|\sB(r)v|dr\]^p+K \Big[\int_t^{t+\e}| \sD(r)v|^2dr\Big]^{\frac p 2},\ \ a.s.\\
\ns\ds \dbE_t\Big[\sup\limits_{_{t\in[t+\e,T]}}|X^{v,\e}_1(s)|^p\Big] \leq K\dbE_t |X_1^{v,\e}(t+\e)|^p, \ \ a.s.
\ea\ee
where $K$ only depends on $p>1$ and $v\in L^2_{\cF_t} (\Omega;\dbR^m)$. Given the backward equations in (\ref{MF-BSDEs-*-1}), for any $t\in[0,T)$ and small $\e>0$, we see that they are uniquely solvable with
\bel{Integral-conditions-for-Y-1-Y-2}\ba{ll}
\ns\ds (Y^{v,\e}_1(\cd,t),Z^{v,\e}_1(\cd,t)), (Y (\cd,t),Z (\cd,t))\in L^2_{\dbF}(\Omega;C([t,T];\dbR^n))\times L^2_{\dbF}(t,T;\dbR^n).
\ea\ee
\end{remark}
\br
When there is no mean-field terms in (\ref{standard-state-equation}), the pair of processes $(Y^{v,\e}_1,Z^{v,\e}_1)$ of (\ref{MF-BSDEs-*-1}) appeared in \cite{Wang-2017-JDE}, but were absent in \cite{Hu-Jin-Zhou-2012}, \cite{Hu-Jin-Zhou-2017} and \cite{Yong-2017}.
\er
The following result shows the roles of previous $(Y ,Z )$, $(Y_1^{v,\e},Z^{v,\e}_1)$.
\begin{lemma}\label{first-preparing-lemma}
Given the notations in (\ref{Simplified-notations-1}), $\e>0$, $u (\cd)\in L^2_{\dbF}(0,T;\dbR^m)$, suppose $(Y ,Z )$, $(Y_1^{v,\e} ,Z_1^{v,\e} )$ satisfy (\ref{Integral-conditions-for-Y-1-Y-2}) and (\ref{MF-BSDEs-*-1}). Then for any $t\in[0,T]$,
$$\ba{ll}
\ns\ds J(u^{v,\e}_0(\cd);t,X(t))-J\big(u(\cd);t,X(t)\big)\\
\ns\ds\qq =\dbE_t\1n\int_t^{t+\e}\1n\lan
 \sR(s)\big[\frac{v\1n}{2}+u (s)\big]- \sB(s)^{\top}\big[ Y_0(s)\g_1 X(t)+Y^{v,\e}_1(s,t)+Y (s,t)\big]\\
\ns\ds\qq\qq
- \sD(s)^{\top}(Z^{v,\e}_1(s,t)+Z (s,t)),v\ran ds.
\ea$$
\end{lemma}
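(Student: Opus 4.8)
The plan is to compute $J(u^{v,\e}_0;t,X(t))-J(u;t,X(t))$ directly from (\ref{cost-functional-1}) and then to convert every term involving the state variation $X^{v,\e}_1=X^{v,\e}_0-X$ into an integral against the perturbation $u^{v,\e}_0-u=vI_{[t,t+\e)}$ by duality. Writing $X^{v,\e}_0=X+X^{v,\e}_1$, $u^{v,\e}_0=u+vI_{[t,t+\e)}$ and expanding the quadratics in (\ref{cost-functional-1}) (using symmetry of $Q,\wt Q,R,\wt R,G,\wt G$), the difference splits into four groups: (i) control terms $\lan Ru,v\ran,\lan\wt R\dbE_tu,v\ran$ and $\frac12\lan Rv,v\ran,\frac12\lan\wt Rv,v\ran$ over $[t,t+\e)$; (ii) the first-order cross terms $\lan QX,X^{v,\e}_1\ran,\lan\wt Q\dbE_tX,\dbE_tX^{v,\e}_1\ran$ and terminal cross terms $\lan GX(T),X^{v,\e}_1(T)\ran,\lan\wt G\dbE_tX(T),\dbE_tX^{v,\e}_1(T)\ran$; (iii) the purely second-order terms $\frac12\lan QX^{v,\e}_1,X^{v,\e}_1\ran$, etc.; and (iv) the linear term $\lan\g_1X(t)+\g_2,\dbE_tX^{v,\e}_1(T)\ran$ (note $X^{v,\e}_1(t)=0$, so $X(t)$ is untouched by the perturbation). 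I would match group (ii) together with the $\g_2$-part of (iv) to $(Y,Z)$, group (iii) to $(Y^{v,\e}_1,Z^{v,\e}_1)$, and the $\g_1X(t)$-part of (iv) to $Y_0$.

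The engine is It\^o's formula applied to $\lan Y,X^{v,\e}_1\ran$ on $[t,T]$ using (\ref{MF-BSDEs-*-1}) and (\ref{3.8}). Collecting the $ds$-terms, the pairs coming from $A$ and $C$ cancel identically, since $\lan A^\top Y,X^{v,\e}_1\ran=\lan Y,AX^{v,\e}_1\ran$ and $\lan C^\top Z,X^{v,\e}_1\ran=\lan Z,CX^{v,\e}_1\ran$, while $X^{v,\e}_1(t)=0$ kills the lower endpoint. Using the terminal data of $Y$ and taking $\dbE_t$, this produces exactly the terms of group (ii) together with $\lan\g_2,\dbE_tX^{v,\e}_1(T)\ran$, equal to $-\dbE_t\int_t^{t+\e}\lan\sB^\top Y+\sD^\top Z,v\ran ds$; here the forcing of $X^{v,\e}_1$ enters only through $Bv+\wt B\dbE_t v=\sB v$ and $Dv+\wt D\dbE_t v=\sD v$ on $[t,t+\e)$, because $\dbE_t[vI_{[t,t+\e)}]=vI_{[t,t+\e)}$ for $\cF_t$-measurable $v$. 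Repeating the identical computation with $\lan Y^{v,\e}_1,X^{v,\e}_1\ran$, and exploiting the factor $\frac12$ built into the driver and terminal condition of $(Y^{v,\e}_1,Z^{v,\e}_1)$, turns group (iii) into $-\dbE_t\int_t^{t+\e}\lan\sB^\top Y^{v,\e}_1+\sD^\top Z^{v,\e}_1,v\ran ds$.

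The step I expect to be most delicate is the treatment of the mean-field coefficients $\wt A,\wt C$ in these two It\^o computations: the cross terms $-\lan\wt A^\top\dbE_tY,X^{v,\e}_1\ran+\lan Y,\wt A\dbE_tX^{v,\e}_1\ran$ (and the analogous $\wt C$ pair) do \emph{not} vanish pointwise, unlike the $A,C$ pairs. They cancel only after conditioning, via $\dbE_t\lan\wt A^\top\dbE_tY,X^{v,\e}_1\ran=\lan\wt A^\top\dbE_tY,\dbE_tX^{v,\e}_1\ran=\dbE_t\lan Y,\wt A\dbE_tX^{v,\e}_1\ran$, valid because $\dbE_tY$ and $\dbE_tX^{v,\e}_1$ are $\cF_t$-measurable and $\wt A$ is deterministic; this is precisely where the mean-field structure forces the conditional-expectation bookkeeping. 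A second, smaller point is the term $\lan\g_1X(t),\dbE_tX^{v,\e}_1(T)\ran$: here I would use that $\eta:=\dbE_tX^{v,\e}_1$ solves the ODE $\dot\eta=\sA\eta+\sB vI_{[t,t+\e)}$ (the martingale part of (\ref{3.8}) drops out under $\dbE_t$), and that $Y_0$ from (\ref{Definition-of-Y-0}) solves the adjoint ODE $\dot Y_0=-\sA^\top Y_0$, $Y_0(T)=-I$; the product rule gives $\frac{d}{ds}(Y_0^\top\eta)=Y_0^\top\sB vI_{[t,t+\e)}$, whence $\dbE_tX^{v,\e}_1(T)=-\int_t^{t+\e}Y_0^\top\sB v\,ds$ and therefore $\lan\g_1X(t),\dbE_tX^{v,\e}_1(T)\ran=-\dbE_t\int_t^{t+\e}\lan\sB^\top Y_0\g_1X(t),v\ran ds$.

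Finally I would assemble the four groups. Group (i) combines, after conditioning and using the symmetry of $R,\wt R$, $\cF_t$-measurability of $v$ and $R\dbE_tu+\wt R\dbE_tu=\sR\dbE_tu$, into $\dbE_t\int_t^{t+\e}\lan\sR[\frac v2+u],v\ran ds$; adding the three duality outputs yields precisely the asserted bracket $-\sB^\top[Y_0\g_1X(t)+Y^{v,\e}_1+Y]-\sD^\top(Z^{v,\e}_1+Z)$. The only analytic input beyond these algebraic manipulations is the integrability recorded in (\ref{Integrability-for-X-1}) and (\ref{Integral-conditions-for-Y-1-Y-2}), which guarantees that the stochastic integrals in the It\^o expansions are genuine martingales (so their $\dbE_t$ vanishes) and that all the conditional expectations and interchanges above are legitimate.
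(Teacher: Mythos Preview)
Your proposal is correct and follows essentially the same route as the paper's proof: expand the quadratic cost into first- and second-order pieces, apply It\^o's formula to $\lan Y,X^{v,\e}_1\ran$ and $\lan Y^{v,\e}_1,X^{v,\e}_1\ran$ to convert the state-variation terms into integrals against $v$ on $[t,t+\e)$, and handle the $\g_1X(t)$ term via the adjoint ODE for $Y_0$. The only cosmetic difference is that the paper treats the $\g_1X(t)$ piece by applying It\^o directly to $\lan\g_1X(t)Y_0(\cd),X^{v,\e}_1(\cd)\ran$ (with $\g_1X(t)$ acting as an $\cF_t$-measurable constant), whereas you first pass to the deterministic ODE for $\eta=\dbE_tX^{v,\e}_1$; the two computations are equivalent, and your explicit remarks on the $\wt A,\wt C$ cancellations under $\dbE_t$ and on $\dbE_tv=v$ are exactly the points the paper uses implicitly.
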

\begin{proof}
For any $t\in[0,T)$ and small $\e$ such that $t+\e\leq T$, we slip $[t,T]$ into $[t,t+\e)$
and $[t+\e,T]$. If $s\in[t,t+\e)$, we have
$$\ba{ll}
\ns\ds\dbE_t\int_t^{t+\e}\[\lan
Q(s)X^{v,\e}_0(s),X^{v,\e}_0(s)\ran-\lan Q(s)X (s),X (s)\ran
\\
\ns\ds \qq\qq+\lan R(s)u^{v,\e}_0(s),u^{v,\e}_0(s)\ran-\lan R(s)u (s),u (s)\ran\]ds\\
\ns\ds=\dbE_t\1n\int_t^{t+\e}\1n\[\lan Q(s)\big[X^{v,\e}_1(s)+2
X (s)\big],X^{v,\e}_1(s)\ran+\lan R(s)\big[v+2u (s)\big],v\ran\]ds,
\ea$$
where $X^{v,\e}_1(\cd)$ is defined in (\ref{3.8}). Similarly
$$\ba{ll}
\ns\ds \dbE_t\int_t^{t+\e}\[\lan
\wt Q(s)\dbE_tX^{v,\e}_0(s),\dbE_tX^{v,\e}_0(s)\ran-\lan \wt Q(s)\dbE_tX (s),\dbE_tX (s)\ran\\
\ns\ds \qq\qq+\lan \wt R(s)\dbE_tu^{v,\e}_0(s),\dbE_tu^{v,\e}_0(s)\ran-\lan \wt R(s)\dbE_tu (s),\dbE_tu (s)\ran\]ds\\
\ns\ds=\dbE_t\1n\int_t^{t+\e}\1n\[\lan \wt Q(s)\big[\dbE_tX^{v,\e}_1(s)+2
\dbE_tX (s)\big],\dbE_tX^{v,\e}_1(s)\ran+\lan \wt R(s)\big[v+2\dbE_tu (s)\big],v\ran\]ds.
\ea
$$
If $s\in[t+\e,T]$,
$$\ba{ll}
\ns\ds\dbE_t\int_{t+\e}^T\[\lan
Q(s)X^{v,\e}_0(s),X^{v,\e}_0(s)\ran-\lan Q(s)X (s),X (s)\ran
\\
\ns\ds \qq\qq+\lan R(s)u^{v,\e}_0(s),u^{v,\e}_0(s)\ran-\lan R(s)u (s),u (s)\ran\]ds\\
\ns\ds=\dbE_t\1n\int_{t+\e}^T \lan Q(s) \big[X^{v,\e}_1(s)+2
X (s)\big],X^{v,\e}_1(s)\ran ds.
\ea$$
Similarly we have
$$\ba{ll}
\ns\ds\dbE_t\int_{t+\e}^T\[\lan \wt
Q(s)\dbE_tX^{v,\e}_0(s),\dbE_tX^{v,\e}_0(s)\ran-\lan \wt Q(s)\dbE_tX (s),\dbE_tX (s)\ran
\\
\ns\ds \qq\qq+\lan \wt R(s)\dbE_tu^{v,\e}_0(s),\dbE_tu^{v,\e}_0(s)\ran-\lan \wt R(s)\dbE_tu (s),\dbE_tu (s)\ran\]ds\\
\ns\ds=\dbE_t\1n\int_{t+\e}^T \lan \wt Q(s) \big[\dbE_tX^{v,\e}_1(s)+2
\dbE_tX (s)\big],\dbE_tX^{v,\e}_1(s)\ran ds.
\ea$$
To sum up, for any $t\in[0,T)$ we obtain that,
\bel{3.11} \ba{ll}
\ns\ds
J(u^{v,\e}_0(\cd);t,X (t))-J\big(u (\cd);t,X (t)\big)=J_1(t )+J_2(t)
\ea\ee
where
$$\left\{\ba{ll}
\ns\ds J_1(t ):=\frac 1 2 \dbE_t\1n\int_t^{T}\1n \Big[\lan
Q(s)X^{v,\e}_1(s),X^{v,\e}_1(s)\ran +\lan
\wt Q(s)\dbE_tX^{v,\e}_1(s),\dbE_tX^{v,\e}_1(s)\ran\Big]
ds
\\
\ns\ds\qq\qq+\frac 1 2 \int_t^{t+\e}\lan \sR(s) v,v\ran ds+\frac 1 2 \dbE_t\lan GX^{v,\e}_1(T)+\wt
G\dbE_tX^{v,\e}_1(T),X^{v,\e}_1(T)\ran,\\
\ns\ds  J_2(t ):=\dbE_t\1n\int_t^{T} \Big[\lan Q(s)
X(s),X^{v,\e}_1(s)\ran +\lan \wt Q(s)
\dbE_tX(s),\dbE_tX^{v,\e}_1(s)\ran \Big] ds\\
\ns\ds\qq\qq+ \int_t^{t+\e}\lan \sR(s)\dbE_tu (s),v\ran ds +\dbE_t\lan GX (T)+\wt
G\dbE_tX (T)+\gamma_1x+\gamma_2,X^{v,\e}_1(T)\ran.
\ea\right.$$
Since $(Y (\cd,t),Z (\cd,t))$ satisfies (\ref{MF-BSDEs-*-1}), by It\^o's formula to $\lan
Y (\cd,t),X^{v,\e}_1(\cd)\ran$ and the fact $X^{v,\e}_1(t)=0$,
\bel{3.12} \ba{ll}
\ns\ds\dbE_t\lan Y (T,t),X^{v,\e}_1(T)\ran\\
\ns\ds  = \dbE_t\1n\int_t^{T}\lan Q(s) X (s),X^{v,\e}_1(s)\ran ds+ \1n\int_t^{T}\lan \wt Q(s) \dbE_tX (s),\dbE_tX^{v,\e}_1(s)\ran ds\\
\ns\ds\qq +\dbE_t\int_t^{t+\e}\lan \sB(s)^{\top}Y (s,t)+ \sD(s)^{\top}Z (s,t),v
\ran ds.
 \ea\ee
Similarly we also obtain that,
\bel{3.13} \ba{ll}
\ns\ds\dbE_t\lan Y^{v,\e}_1(T,t),X^{v,\e}_1(T)\ran\\
\ns\ds=\frac 1 2 \dbE_t\1n\int_t^{T}\lan Q(s) X^{v,\e}_1(s),X^{v,\e}_1(s)\ran ds+\frac 1 2 \dbE_t\1n\int_t^{T}\lan \wt Q(s) \dbE_tX^{v,\e}_1(s),\dbE_tX^{v,\e}_1(s)\ran ds\\
\ns\ds\qq
  +\dbE_t\int_t^{t+\e}\lan
 \sB(s)^{\top}Y^{v,\e}_1(s,t)+ \sD(s)^{\top}Z^{v,\e}_1(s,t),v \ran ds.
\ea\ee
In order to deal with $\lan \g_1X(t),\dbE_t X_1^{v,\e}(T)\ran$, we use It\^{o}'s formula to $\lan \g_1X(t) Y_0(\cd),X_1^{v,\e}(\cd)\ran$ on $[t,T]$,
\bel{gamma-1-x-ito}\ba{ll}
\ns\ds \dbE_t \lan \g_1X(t),X_1^{v,\e}(T)\ran =-\dbE_t\int_t^{t+\e}\lan  \sB(s)^{\top}  Y_0(s)\g_1X(t),v\ran ds.
\ea\ee
Therefore, our conclusion follows from (\ref{3.11}), (\ref{3.12}), (\ref{3.13}), (\ref{gamma-1-x-ito}).
\end{proof}
In the following, we first deal with the terms with respect to $(Y^{v,\e}_1(\cd,\cd),Z^{v,\e}_1(\cd,\cd))$ in Lemma \ref{first-preparing-lemma}.
\begin{lemma}\label{Second-preparing-lemma}
Suppose (H1) holds, $(X^{v,\e}_1,Y^{v,\e}_1,Z^{v,\e}_1)$ satisfy (\ref{MF-BSDEs-*-1}) and (\ref{3.8}). Then for almost $t\in[0,T]$,
\bel{3.15}\ba{ll}
\ns\ds\frac{1}{\e}\dbE_t\int_t^{t+\e}\lan
 \sB(s)^{\top}Y^{v,\e}_1(s,t)+ \sD (s)^{\top}Z^{v,\e}_1(s,t),v\ran
ds =\frac{1}{2\e}\dbE_t \lan \int_t^{t+\e}  \sD (s)^{\top}\h P_1(s) \sD(s)ds
v,v\ran+o(1),
\ea\ee
where $\sB(\cd),\ \sD(\cd)$ are defined in (\ref{Simplified-notations-1}), $\h P_1(\cd)$ satisfies
\bel{Equation-for-hat-P-1}\left\{\ba{ll}
\ns\ds d\h P_1=-\Big[\h P_1 A+A^{\top}\h P_1+C^{\top}\h P_1C-Q\Big]ds,\ \ s\in[0,T],\\
\ns\ds \h P_1(T)=-G.
\ea\right.\ee
\end{lemma}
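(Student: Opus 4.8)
The plan is to exploit the algebraic structure of the BSDE for $(Y^{v,\e}_1,Z^{v,\e}_1)$ in (\ref{MF-BSDEs-*-1}) together with the scaling estimates (\ref{Integrability-for-X-1}), and to recognize that the matrix ODE (\ref{Equation-for-hat-P-1}) is exactly what is needed to decouple $Y^{v,\e}_1$ from $X^{v,\e}_1$ to leading order. The starting observation is that, by (\ref{Integrability-for-X-1}) with $p=2$, on the spike interval one has $\dbE_t\sup_{s\in[t,t+\e]}|X^{v,\e}_1(s)|^2 = O(\e)$, so $X^{v,\e}_1$ is of size $\sqrt\e$ in $L^2$, whereas its conditional mean $\dbE_t X^{v,\e}_1(s)$ carries no Brownian contribution at leading order and is of size $\e$. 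This gap, $X^{v,\e}_1=O(\sqrt\e)$ but $\dbE_t X^{v,\e}_1=O(\e)$, is precisely what forces the mean-field coefficients $\wt A,\wt C,\wt Q,\wt G$ out of the limit and leaves only $A,C,Q,G$ in (\ref{Equation-for-hat-P-1}).

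First I would introduce the corrector $\bar Y(s):=Y^{v,\e}_1(s,t)-\frac12\h P_1(s)X^{v,\e}_1(s)$ and its martingale integrand $\bar Z$. Applying It\^o's formula to $\frac12\h P_1 X^{v,\e}_1$ (on $[t,t+\e)$ the diffusion of $X^{v,\e}_1$ contains the genuine source $\sD v$, on $[t+\e,T]$ it does not), subtracting from (\ref{MF-BSDEs-*-1}), and then substituting back $Y^{v,\e}_1=\bar Y+\frac12\h P_1 X^{v,\e}_1$ and $Z^{v,\e}_1=\bar Z+\frac12\h P_1(C X^{v,\e}_1+\wt C\dbE_t X^{v,\e}_1+\sD v)$, one checks that $(\bar Y,\bar Z)$ solves a linear mean-field BSDE with the same homogeneous generator $A^\top\bar Y+\wt A^\top\dbE_t\bar Y+C^\top\bar Z+\wt C^\top\dbE_t\bar Z$. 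The decisive point is that every term proportional to $X^{v,\e}_1$ itself cancels: the pieces $\frac12 A^\top\h P_1 X^{v,\e}_1+\frac12 C^\top\h P_1 C X^{v,\e}_1$ coming from $\frac12(d\h P_1)X^{v,\e}_1$ are annihilated by the $-\frac12 A^\top\h P_1 X^{v,\e}_1$ and $-\frac12 C^\top\h P_1 C X^{v,\e}_1$ produced when $Y^{v,\e}_1,Z^{v,\e}_1$ are inserted into $-A^\top Y^{v,\e}_1$ and $-C^\top Z^{v,\e}_1$, and this cancellation is exactly what the choice of coefficients in (\ref{Equation-for-hat-P-1}), including the It\^o correction $C^\top\h P_1 C$, guarantees. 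What survives as inhomogeneous source is a combination of $\dbE_t X^{v,\e}_1$-terms (all $O(\e)$) together with $-\frac12(\sC^\top\h P_1\sD+\h P_1\sB)v\,I_{[t,t+\e)}(s)$; moreover the terminal datum collapses to $\bar Y(T)=-\frac12\wt G\dbE_t X^{v,\e}_1(T)=O(\e)$ thanks to $\h P_1(T)=-G$.

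Consequently both the terminal value and the time-integral of the source of the $\bar Y$-equation are $O(\e)$, so the standard conditional a priori estimate for linear mean-field BSDEs gives $\dbE_t\sup_{s\in[t,T]}|\bar Y(s)|^2+\dbE_t\int_t^T|\bar Z(s)|^2\,ds=O(\e^2)$. I would then insert the decompositions of $Y^{v,\e}_1$ and $Z^{v,\e}_1$ into the left side of (\ref{3.15}) and estimate term by term using Cauchy--Schwarz and $\dbE_t\int_t^{t+\e}|v|^2\,ds=\e|v|^2$: the $\sB^\top Y^{v,\e}_1$ contribution, the $\bar Z$ contribution and the $C X^{v,\e}_1$ contribution are each $O(\sqrt\e)$ after division by $\e$, while the $\dbE_t X^{v,\e}_1$ contribution is $O(\e)$; the only term of unit order is $\frac1\e\dbE_t\int_t^{t+\e}\lan\sD^\top\frac12\h P_1\sD v,v\ran\,ds$, which is precisely the right side of (\ref{3.15}). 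Collecting the remaining terms into $o(1)$ finishes the argument.

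I expect the main obstacle to be bookkeeping rather than conceptual: verifying cleanly that the $O(\sqrt\e)$ (i.e. $X^{v,\e}_1$-proportional) part of the $\bar Y$-driver vanishes, which is what pins down the exact form of (\ref{Equation-for-hat-P-1}), and keeping careful track of which quantities are $O(\sqrt\e)$ versus $O(\e)$. A secondary technical point is the passage to the ``almost every $t$'' statement: the a priori BSDE constants and the moment bounds (\ref{Integrability-for-X-1}) must be used in their conditional form and uniformly enough in $t$, and one must invoke Lebesgue-point and measurability properties of the merely $L^\infty$ coefficients so that the pointwise-in-$t$ expansion is legitimate off a null set.
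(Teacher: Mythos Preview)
Your argument is correct, and it takes a genuinely different route from the paper. The paper obtains an \emph{exact} closed-form representation of $(Y^{v,\e}_1,Z^{v,\e}_1)$ on $[t,T]$: it introduces, in addition to $\h P_1$, two further deterministic matrices $\h P_2$ (on $[0,T]$) and $\h P_3$ (on $[t,t+\e]$, with $\h P_3(t+\e)=0$), defines the ansatz $\frac12\h P_1 X^{v,\e}_1+\frac12\h P_2\,\dbE_t X^{v,\e}_1+\frac12\h P_3 v$ together with the corresponding $Z$-candidate, checks by It\^o's formula that this pair solves the second BSDE in (\ref{MF-BSDEs-*-1}), and concludes by uniqueness of mean-field BSDEs. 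With this exact expression for $\sB^\top Y^{v,\e}_1+\sD^\top Z^{v,\e}_1$ in hand, the remainder estimate is then a direct application of $\dbE_t\sup_{[t,t+\e]}|X^{v,\e}_1|^2=O(\e)$ and $\h P_3=O(\e)$. By contrast you subtract only the single corrector $\frac12\h P_1 X^{v,\e}_1$, leave the mean-field and spike residues in the driver of the $(\bar Y,\bar Z)$-equation, and kill them via the a~priori BSDE estimate; the key extra observation you need (and correctly use) is that $\dbE_t X^{v,\e}_1$ is $O(\e)$ and not merely $O(\sqrt\e)$, so that the source of the $\bar Y$-equation has $L^1$-norm $O(\e)$ on the whole of $[t,T]$. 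Your route is leaner for the purpose of this lemma since only the leading term matters, and it avoids introducing $\h P_2,\h P_3$ altogether; the paper's route is more constructive and yields the full decomposition of $(Y^{v,\e}_1,Z^{v,\e}_1)$, which makes the structure of the second-order term completely explicit and connects transparently with the second-order adjoint equation noted in Remark~3.2.
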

\begin{proof}
For any $t\in[0,T)$ and small $\e>0$ such that $t+\e\leq T$, from the definition of $u^{v,\e}_0(\cd)$ we separate $[t,T]$ into $[t+\e,T]$ and $[t,t+\e)$. Firstly we look
at the case on $[t+\e,T]$, where
\bel{MF-FBSDE-v-e-1} \ba{ll}
\left\{\2n\ba{ll}
\ns\ds
dX^{v,\e}_1(s)=\big[ A(s)X^{v,\e}_1(s)+\wt A(s)\dbE_t X^{v,\e}_1(s) \big] ds + \big[C(s)X^{v,\e}_1(s)+\wt C(s)\dbE_tX^{v,\e}_1(s)\big] dW(s),
\\
\ns\ds X^{v,\e}_1(t+\e)=X^{v,\e}_1(t+\e).\ea\right.
\ea\ee
To represent $(Y_1^{v,\e},Z^{v,\e}_1)$ in $[t+\e,T]$, we need above $\h P_1(\cd)$ and the following $\h P_2(\cd)$ on $[0,T]$,
$$\left\{
\ba{ll}
\ns\ds d\h P_2=-\Big[\h P_2\sA+\sA^{\top}\h P_2+\h P_1\wt A
+\wt A^{\top}\h P_1+\wt C^{\top}\h P_1\sC +C^{\top}\h P_1\wt C-\wt Q\Big]ds,\\
\ns\ds \ \h P_2(T)=-\wt G.
\ea\right.$$
We define two processes as,
\bel{ }\left\{\ba{ll}
\ns\ds  \cY^{v,\e}_1(s,t):=\frac  1 2 \h P_1(s)X^{v,\e}_1(s)+\frac  1 2 \h P_2(s)\dbE_t[X^{v,\e}_1(s)],\qq  s\in[t+\e,T], \\
\ns\ds  \cZ_1^{v,\e}(s):=\frac 1 2\h P_1(s)\big[ C(s)X^{v,\e}_1(s)+\wt C(s)\dbE_t X^{v,\e}_1(s)\big],\ \ s\in[0,T].
\ea\right.
\ee
We see that $(\cY^{v,\e}_1, \cZ^{v,\e}_1)\in L^2_{\dbF}(\Omega;C([t+\e,T];\dbR^n))\times L^2_{\dbF}(0,T;\dbR^n)$. For $t\in[0,T),$ since
$$\ba{ll}
 \ns\ds
 d\dbE_t X_1^{v,\e}(s)= \sA(s) \dbE_t X_1^{v,\e}(s)ds,\ \ s\in[t,T],
\ea
$$
it then follows from It\^{o}'s formula that
$$\left\{
\ba{ll}
\ns\ds d\big[ \h P_1X_1^{v,\e}\big]=\Big[-\big(A^{\top}\h P_1+C^{\top}\h P_1C-Q\big) X_1^{v,\e}+\h P_1 \wt A\dbE_t X^{v,\e}_1\Big]ds+\h P_1(CX^{v,\e}_1+\wt C\dbE_t X^{v,\e}_1)dW(s),\\
\ns\ds d\Big[\h P_2\dbE_t\big(X^{v,\e}_1\big)\Big]=-\Big\{\sA^{\top}\h P_2+\h P_1\wt A+\wt A^{\top}\h P_1+\wt C^{\top}\h P_1\sC +C^{\top}\h P_1\wt C-\wt Q
\Big\}\dbE_t\big(X^{v,\e}_1\big)ds.
\ea\right.
$$
As a result, for $s\in[t,T]$ we see that
$$
\ba{ll}
\ns\ds d \cY_1^{v,\e}= -\frac 1 2 \Big\{\sA ^{\top}\h P_2+\wt A^{\top}\h P_1+\wt C^{\top}\h P_1\sC +C^{\top}\h P_1\wt C-\wt Q
\Big\}\dbE_t\big(X^{v,\e}_1\big)ds\\
\ns\ds\qq\qq\q\
- \frac 1 2
\big(A^{\top}\h P_1+C^{\top}\h P_1C-Q\big)X^{v,\e}_1 ds
 +\frac 1 2 \h P_1(CX^{v,\e}_1+\wt C\dbE_t X^{v,\e}_1)dW(s).
\ea
$$
On the other hand,
$$
\ba{ll}
\ns\ds -\wt A^{\top}\dbE_t \cY_1^{v,\e}-\wt C^{\top}\dbE_t \cZ_1^{v,\e}-
A^{\top} \cY_1^{v,\e}-C^{\top}\cZ_1^{v,\e}\\
\ns\ds= -\frac 1 2 \Big[\wt A^{\top}(\h P_1+\h P_2)+\wt C^{\top}\h P_1 \sC +A^{\top}\h P_2+C^{\top}\h P_1\wt C\Big]\dbE_t X_1^{v,\e}-\frac 1 2 \Big[A^{\top}\h P_1 +C^{\top}\h P_1 C\Big]X^{v,\e}_1.
\ea
$$
Hence $(\cY^{v,\e}_1, \cZ^{v,\e}_1)$ satisfies the second backward equation of (\ref{MF-BSDEs-*-1}). The uniqueness of BSDEs show that
$$\ba{ll}
\ns\ds   \dbP\Big\{\omega\in\Omega;\ \cY^{v,\e}_1(s,t)=Y^{v,\e}_1(s,t),\ \ \forall s\in[t+\e,T]\Big\}=1,\ \ t\in[0,T],\\
\ns\ds \dbP\Big\{\omega\in\Omega;\ \cZ^{v,\e}_1(s,t) = Z^{v,\e}_1(s)\Big\}=1,\ \ s\in[t+\e,T], \ \ a.e. \ \ t\in[0,T].
\ea
$$
By the value of $Y^{v,\e}_1(t+\e,t)$, we
continue to study
$(X^{v,\e}_1(\cd),Y^{v,\e}_1(\cd,t),Z^{v,\e}_1(\cd,t))$ on
$[t,t+\e]$,
\bel{MF-FBSDE-v-e-2} \ba{ll}
\left\{\2n\ba{ll}
\ns\ds
dX^{v,\e}_1=\[A X^{v,\e}_1 +\wt A \dbE_tX^{v,\e}_1 +\sB  v\]ds   +\[C X^{v,\e}_1+
\wt C \dbE_t X^{v,\e}_1  +\sD v\]dW(s),\ \
\\
\ns\ds dY^{v,\e}_1 =-\Big[A ^{\top}Y^{v,\e}_1 +\wt A ^{\top}\dbE_t Y^{v,\e}_1 +C ^{\top}Z^{v,\e}_1 +\wt C ^{\top}\dbE_tZ^{v,\e}_1   -\frac 1 2 Q X_1^{v,\e} -\frac 1 2 \wt Q \dbE_t X_1^{v,\e} \Big]ds+Z_1^{v,\e} dW(s), \\
\ns\ds X^{v,\e}_1(t)=0,\ \ Y^{v,\e}_1(t+\e,t)=Y^{v,\e}_1(t+\e,t). \ea\right.
\ea\ee
To represent above $(Y_1^{v,\e},Z_1^{v,\e})$, we define two processes on $[t,t+\e]$,
\bel{ }\left\{\ba{ll}
\ns\ds  \h Y^{v,\e}_1(s,t):=\frac  1 2 \h P_1(s)X^{v,\e}_1(s)+\frac  1 2 \h P_2(s)\dbE_t[X^{v,\e}_1(s)]+\frac 1 2 \h P_3(s)v, \\
\ns\ds \h Z_1^{v,\e}(s):=\frac 1 2\h P_1(s)\big[C(s)X^{v,\e}_1(s)+\wt C(s)\dbE_t X^{v,\e}_1(s) +\sD (s)v\big],
\ea\right.
\ee
where $\h P_1(\cd)$, $\h P_2(\cd)$ appeared above and for $s\in[t,t+\e],$
$$
\left\{ \ba{ll}
\ns\ds d\h P_3=-\big[\sA^{\top} P_3+\sC^{\top}\h P_1\sD+(\h P_1+\h P_2)\sB\big]ds,\\
\ns\ds \h P_3(t+\e)=0.
\ea\right.
$$
%
Here we observe that
$$
\ba{ll}
\ns\ds  d\dbE_t X_1^{v,\e}=\[\sA \dbE_t X_1^{v,\e}+\sB v\]ds,\ \ s\in[t,t+\e].
\ea
$$
Using It\^{o}'s formula, we see that
$$\left\{
\ba{ll}
\ns\ds d\big[ \h P_1X_1^{v,\e}\big]=\Big[-(A^{\top}\h P_1  +C^{\top}\h P_1C -Q)X_1^{v,\e}+\h P_1\wt A\dbE_tX_1^{v,\e}+\h P_1 \sB v \Big]ds\\
\ns\ds\qq\qq\q
+\h P_1\big(CX^{v,\e}_1+\wt C\dbE_t X^{v,\e}_1+\sD v\big)dW(s),\\
\ns\ds d\Big[\h P_2\dbE_t\big(X^{v,\e}_1\big)\Big]=
\Big\{-\Big[\sA^{\top}\h P_2+\h P_1\wt A+\wt A^{\top}\h P_1+\wt C^{\top}\h P_1 \sC+C^{\top}\h P_1\wt C-\wt Q
\Big]\dbE_t\big(X^{v,\e}_1\big) +\h P_2\sB v\Big\}ds.
\ea\right.
$$
On the other hand, according to the definition of $(\h Y_1^{v,\e},\h Z_1^{v,\e})$, we have
$$
\ba{ll}
\ns\ds  -
A ^{\top} \h Y_1^{v,\e} -C ^{\top}\h Z_1^{v,\e}  -
\wt A ^{\top} \dbE_t\h Y_1^{v,\e} -\wt C ^{\top}\dbE_t\h Z_1^{v,\e}+\frac 1 2 Q X_1^{v,\e}+\frac 1 2 \wt Q\dbE_t X_1^{v,\e}\\
\ns\ds = -\frac 1 2 \Big[\wt A^{\top}(\h P_1+\h P_2)+\wt C^{\top}\h P_1\sC+A^{\top}\h P_2+C^{\top}\h P_1\wt C- \wt Q\Big]\dbE_t X_1^{v,\e}\\
\ns\ds\q-\frac 1 2 \Big[A^{\top}\h P_1 +C^{\top}\h P_1 C- Q\Big]X^{v,\e}_1-\frac 1 2 \sA ^{\top}P_3 v-\frac 1 2 \sC^{\top}\h P_1 \sD v,
\ea
$$
from which $(\h Y_1^{v,\e},\h Z_1^{v,\e})$ satisfies BSDE (\ref{MF-BSDEs-*-1}). Moreover, it is easy to see
\bel{}\ba{ll}
\ns\ds (\h Y^{v,\e}_1(\cd,t),\h Z^{v,\e}_1(\cd,t))\in L^2_{\dbF}(\Omega;C([t,t+\e];\dbR^n))\times L^2_{\dbF}(t,t+\e;\dbR^n).
\ea\ee
The uniqueness of BSDEs shows that $(\h Y^{v,\e}_1(\cd,t),\h Z^{v,\e}_1(\cd,t))=( Y^{v,\e}_1(\cd,t), Z^{v,\e}_1(\cd,t))$ on $[t,t+\e]$.
As a result, for any $s\in[t,t+\e),$
\bel{B-Y-1-D-Z-1}\ba{ll}
\ns\ds \sB ^{\top}Y_1^{v,\e}+\sD  ^{\top}Z^{v,\e}_1
=\frac 1 2 \Big\{\big[\sB ^{\top}\h P_1+\sD^{\top}\h P_1C\big]X_1^{v,\e}+\big[\sB ^{\top}\h P_2+\sD ^{\top}\h P_1\wt C\big]\dbE_tX_1^{v,\e} +\big[\sB ^{\top}\h P_3+\sD ^{\top}\h P_1\sD \big]v\Big\}.
\ea\ee
By (\ref{Integrability-for-X-1}), we know that $\dbE_t\Big[\sup\limits_{_{t\in[t,t+\e)}}|X^{v,\e}_1(s)|^2\Big]
=O(\e)$, from which we derive
\bel{ } \left\{\2n\ba{ll}
\ns\ds\frac{1}{\e}\dbE_t\int_t^{t+\e}\lan
\[ \sB(s) ^{\top}\h P_1(s)+ \sD (s) ^{\top}\h P_1(s)C(s)
\]X^{v,\e}_1(s),v\ran ds=o(1),\\
\ns\ds\frac{1}{\e}\dbE_t\int_t^{t+\e}\lan \big[\sB (s) ^{\top} P_2(s)+\sD ^{\top}\h P_1\wt C\big]\dbE_t[X^{v,\e}_1(s)]+\sB(s) ^{\top}\h P_3(s)v,v\ran
ds=o(1).\ea\right. \ee
Then our conclusion is easy to see.
\end{proof}
\br
Notice that (\ref{Equation-for-hat-P-1}) is consistent with the second-order adjoint equation in optimal control problem of mean-field SDEs, see e.g., \cite{Buckdahn-Djehiche-Li-2011}.
\er

\subsection{The case of open-loop equilibrium controls}

In this part, we give the characterizations of open-loop equilibrium controls.

At first, we introduce a representation for $(Y ,Z )$ in (\ref{MF-BSDEs-*-1}).  %
We observe that
$$
d\dbE_t X=\Big[\sA\dbE_t X+\sB\dbE_t\f+\dbE_t b\Big]ds.
$$
For $t\in[0,T]$ and $s\in[t,T]$, suppose that
\bel{Representation-for-Y-*}\ba{ll}
\ns\ds  Y(s,t)=\cP_1(s)X(s)+  \cP_2(s)\dbE_tX(s)+\dbE_t \cP_{3}(s)+ \cP_{4}(s),
\ea
\ee
where $\cP_1(\cd),\ \cP_2(\cd)$ are deterministic, $\cP_3(\cd), \ \cP_4(\cd)$ are stochastic processes satisfying
$$
\ba{ll}
\ns\ds  d \cP_i(s)=\Pi_{i}(s)ds,\ \ i=1,2, \ \ \cP_1(T)=-G,\ \  \cP_2(T)=-\wt G, \\
\ns\ds d \cP_j(s)=\Pi_j(s)ds+\cL_j(s)dW(s),\ \ j=3,4,\ \  \cP_3(T)=0,\ \ \cP_{4}(T)=-\gamma_2.
\ea
$$
Here $\Pi_i(\cd)$ are to be determined. Using It\^{o}'s formula, we derive that
$$\left\{
\ba{ll}
\ns\ds d\big[  \cP_1 X\big]=\Big[\Pi_{1} X+ \cP_1(A X +\wt A\dbE_tX +B u +\wt B\dbE_t u+b)\Big]ds\\
\ns\ds\qq\qq\q+ \cP_1\big( C X+D u+ \wt C \dbE_tX+\wt D\dbE_t u+\si\big)dW(s),\\
\ns\ds d\big[\cP_2\dbE_t X\big]=\Big\{\Pi_2 \dbE_t X+\cP_2 \big[\sA \dbE_tX+\sB\dbE_t u+\dbE_tb\big]\Big\}ds.
\ea\right.
$$
As a result, we have
$$
\ba{ll}
\ns\ds d Y=\Big\{\Pi_{1} X+ \cP_1( A X +\wt A\dbE_tX +B u+\wt B\dbE_t u+b)+\Pi_2 \dbE_t X
\\
\ns\ds\qq\ +\cP_2 \big[ \sA \dbE_tX+\sB\dbE_t u+\dbE_tb\big]
+\dbE_t\Pi_3+\Pi_4 \Big\}ds\\
\ns\ds\qq\ +\Big[ \cP_1\big( C X+\wt C \dbE_tX +D u+\wt D\dbE_t u+\si\big)+\cL_{4}\Big]dW(s).
\ea
$$
Consequently, it is necessary to see
\bel{Representation-for-Z-*}
\ba{ll}
\ns\ds Z= \cP_1\big( C X+\wt C \dbE_tX +Du+\wt D\dbE_t u+\si\big)+\cL_{4}.
\ea
\ee
In this case, from (\ref{Representation-for-Y-*}), (\ref{Representation-for-Z-*}), we see that
$$\left\{
\ba{ll}
\ns\ds \dbE_t Y=(\cP_1+\cP_2)\dbE_t X +\dbE_t \big[\cP_{3}+\cP_{4}\big],\\
\ns\ds \dbE_t Z=\dbE_t\Big[\cP_1\big(\sC X +\sD u +\si\big)\Big]+\dbE_t\cL_{4}.
\ea\right.
$$
On the other hand, by the previous representations,
$$
\ba{ll}
\ns\ds -
A^{\top} Y-C^{\top}Z -
\wt A^{\top} Y-\wt C^{\top}Z+Q X+\wt Q\dbE_t X\\
\ns\ds=  -A^{\top}\Big\{\cP_1X+\cP_2\dbE_t X +\dbE_t \cP_{3}+ \cP_{4} \Big\}
-C^{\top} \Big\{\cP_1\big( C X+\wt C \dbE_tX +D u +\wt D\dbE_t u+\si\big)+\cL_{4}\Big\}\\
\ns\ds\q-\wt A^{\top}\Big[(\cP_1+\cP_2)\dbE_t X +\dbE_t \big[\cP_{3}+\cP_{4}\big]\Big] -\wt C^{\top}\Big[\dbE_t\Big[\cP_1\big(\sC X +\sD u +\si\big)\Big]+\dbE_t\cL_{4}\Big]+Q X+\wt Q\dbE_t X.
\ea
$$
At this moment, we can choose $\Pi_i(\cd)$ in the following ways,
$$\left\{
\ba{ll}
\ns\ds 0=\Pi_1+\cP_1 A+A^{\top}\cP_1+C^{\top}\cP_1 C-Q,\ \    \\
\ns\ds 0=\Pi_2+\cP_2\sA+\sA^{\top} \cP_2+\cP_1\wt A+\wt A^{\top} \cP_1
+\wt C^{\top}\cP_1\sC+C^{\top}\cP_1\wt C-\wt Q
\\
\ns\ds 0=\Pi_{4} +A^{\top}\cP_{4}+C^{\top}\cL_{4}+\cP_1(B u +b)+C^{\top}\cP_1(D u+\si),\\
\ns\ds 0=\Pi_{3}+\wt A^{\top}[ \cP_{3}+\cP_{4}]+A^{\top}\cP_{3}
+\wt C^{\top}\cL_4
+\cP_1\wt B u \\
\ns\ds\qq  +\cP_2\big[\sB u+b\big]+\wt C^{\top}\cP_1\big[\sD u+ \si\big]+C^{\top}\cP_1\wt D u.
\ea\right.
$$
Next we make above arguments rigorous.
Given the notations in (\ref{Simplified-notations-1}), for $s\in[0,T]$, we consider the following systems of equations
\bel{Equations-for-P-i-in-uniqueness}\left\{
\ba{ll}
\ns\ds d\cP_1=-(\cP_1 A+A^{\top}\cP_1+C^{\top}\cP_1 C-Q)ds,\ \ \\
\ns\ds d\cP_2=-\Big\{\cP_2\sA+\sA^{\top}\cP_2+\cP_1\wt A+\wt A^{\top} \cP_1
+\wt C^{\top}\cP_1\sC+C^{\top}\cP_1\wt C-\wt Q\Big\}ds,\\
\ns\ds d\cP_{3}=-\Big[ \sA^{\top} \cP_{3}+\wt A^{\top} \cP_{4}
+\wt C^{\top}\cL_4
+\cP_1\wt B u +\cP_2\big[\sB u+b\big]+\wt C^{\top}\cP_1\big[\sD u+ \si\big]\\
\ns\ds\qq\qq +C^{\top}\cP_1\wt D u\Big]ds+\cL_{3}dW(s),\\
d\cP_4=-\Big\{A^{\top}\cP_{4}+C^{\top}\cL_{4}+\cP_1(B u +b)+C^{\top}\cP_1(D u+\si)\Big\}ds+\cL_4dW(s),
\\
\ns\ds \cP_1(T)=-G,\ \cP_2(T)=-\wt G,\ \cP_3(T)=0,\ \cP_4(T)=-\gamma_2.
\ea\right.\ee
For $u(\cd)\in L^2_{\dbF}(0,T;\dbR^m)$, under (H1) it is obvious to see the existence and uniqueness of $\cP_1(\cd)$, $\cP_2(\cd),$ $(\cP_3(\cd),\cL_3(\cd)), \ (\cP_4(\cd),\cL_4(\cd))$ and
$$
\ba{ll}
\ns\ds \cP_1(\cd),\ \cP_2(\cd)\in C([0,T];\dbR^{n\times n}),\ (\cP_{3 },\cL_{3}),(\cP_{4},\cL_{4})\in  L^2_{\dbF}(\Omega;C([0,T];\dbR^{n}))\times L^2_{\dbF}(0,T;\dbR^{n}).
\ea
$$
At this moment, for $s\in[0,T]$, and $t\in[0,s]$, we define a pair of processes
\bel{Arbitrary-s-Y-Z-general}\left\{
\ba{ll}
\ns\ds \sY(s,t):=\cP_1(s)X(s)+\cP_2(s)\dbE_tX(s) + \dbE_t\cP_3(s)+\cP_4(s),  \\
\ns\ds \sZ(s,t):=\cP_1(s)\big( C(s) X(s)+\wt C(s)\dbE_t X(s)+D(s)u(s) +\wt D(s)\dbE_t u(s)+\si(s)\big)+\cL_4(s).
\ea\right.\ee
By the results of $\cP_i(\cd)$, we can conclude that
$$(\sY_{d}(\cd),\sZ_d(\cd))\in L^2_{\dbF}(\Omega;C([0,T];\dbR^n))\times L^2_{\dbF}(0,T;\dbR^n)$$
where $(\sY_{d}(s ),\sZ_{d}(s ))\equiv(\sY(s,s),\sZ(s,s))$ with $s\in[0,T]$.
We present the following representation for $(Y,Z)$.
\bl\label{representation-for-Y-Z-*}
\rm Suppose $u(\cd)\in L^2_{\dbF}(0,T;\dbR^m)$ and $(Y,Z)$ is the unique pair of solution for the first BSDE in (\ref{MF-BSDEs-*-1}). Then for any $t\in[0,T]$,
\bel{representation-for-any-y-z}\ba{ll}
\ns\ds   \dbP\Big\{\omega\in\Omega;\ Y(s,t)=\sY(s,t),\ \ \forall s\in[t,T] \Big\}=1,\\
\ns\ds \dbP\Big\{\omega\in\Omega;\ Z (s,t) = \sZ(s,t)\Big\}=1,\ \ s\in[t,T]. \ a.e.
\ea
\ee
\el
\begin{proof}
Given (\ref{Arbitrary-s-Y-Z-general}), it is easy to see that
$$\left\{
\ba{ll}
\ns\ds \dbE_t\sY =(\cP_1+\cP_2)\dbE_tX+  \dbE_t[\cP_3+\cP_4],\\
\ns\ds \dbE_t\sZ =P_1\Big[\sC\dbE_t X+ \sD  \dbE_t u+\dbE_t\si \Big]+\dbE_t\cL_4.
\ea\right.$$
Using It\^{o}'s formula, we know that
$$\left\{
\ba{ll}
\ns\ds d\big[\cP_1X \big]=-\big[A^{\top}\cP_1+C^{\top}\cP_1 C-Q\big]
X ds+P_1(\wt A\dbE_t X+ Bu +\wt B\dbE_tu +b)ds\\
\ns\ds\qq\qq\q+P_1\big[ C X+\wt C\dbE_t X+D u+\wt D\dbE_tu  +\si\big]dW(s),\\
\ns\ds  d\big[\cP_2\dbE_t X\big]=\Big\{\cP_2 \big[ \sB\dbE_t u+\dbE_tb\big]-\big[\sA^{\top} \cP_2+\cP_1\wt A+\wt A^{\top}\cP_1
+\wt C^{\top}\cP_1 \sC+C^{\top}\cP_1\wt C-\wt Q\big] \dbE_t X\Big\}ds.
\ea\right.$$
Consequently, after some calculations one has
$$
\ba{ll}
\ns\ds d \sY=d\Big[\cP_1X+\cP_2\dbE_t X +\dbE_t\cP_3+\cP_4 \Big]\\
\ns\ds \qq=\Big\{-(A^{\top}\cP_1+C^{\top}\cP_1 C-Q)X+
\cP_1(\wt A \dbE_tX+Bu+\wt B\dbE_tu+b)+
\cP_2 \big[ \sB\dbE_tu+\dbE_tb\big]\\
\ns\ds\qq \q-\big[\sA^{\top}\cP_2+P_1\wt\cA+\wt A^{\top}\cP_1+\wt C^{\top}\cP_1\sC +C^{\top}\cP_1\wt C-\wt Q\big] \dbE_t X\\
\ns\ds\qq\q -\dbE_t \Big[\sA^{\top} \cP_{3} +\wt A^{\top}\cP_{4}
+\wt C^{\top}\cL_4
+\cP_1\wt Bu +\cP_2\big[\sB u+b\big]+\wt C^{\top}\cP_1\big[\sD u+ \si\big]\\
\ns\ds\qq\qq +C^{\top}\cP_1\wt D u\Big]
-\Big\{A^{\top}\cP_{4}+C^{\top}\cL_{4}+\cP_1(B u +b)+C^{\top}\cP_1(D u+\si)\Big\}
\Big\}ds+\sZ dW(s)\\
\ns\ds \qq = \Big\{-\wt A^{\top}\dbE_t \sY-\wt C^{\top}\dbE_t \sZ-
A^{\top} \sY-C^{\top}\sZ+QX+\wt Q\dbE_t X\Big\}ds+\sZ dW(s).
\ea$$
Then for any $t\in[0,T]$, $(\sY,\sZ)\in L^2_{\dbF}(\Omega;C([t,T];\dbR^n))\times L^2_{\dbF}(0,T;\dbR^n)$ satisfies the first backward equation in (\ref{MF-BSDEs-*-1}). By the uniqueness of BSDEs, we see the conclusion.
\end{proof}
\begin{remark}
Given $(Y,Z)\in L^2_{\dbF}(\Omega;C([t,T];\dbR^n))\times L^2_{\dbF}(t,T;\dbR^n)$ satisfying (\ref{MF-BSDEs-*-1}), one can not usually conclude that $( Y_d (\cd), Z_d(\cd))$ is meaningful, where  $ (Y_d(s), Z_d(s))\equiv(Y(s,s),Z(s,s))$, $s\in[0,T]$.
However, Lemma \ref{representation-for-Y-Z-*} indicates that there is a version $(\sY,\sZ)$ such that $\sY_d(\cd)\in L^2_{\dbF}(\Omega;C([0,T];\dbR^n))$ and $\sZ(\cd)\in L^2_{\dbF}(0,T;\dbR^n)$. To our best, the conclusion of Lemma \ref{representation-for-Y-Z-*} is new in the literature.
\end{remark}
To obtain the main result, we need one more result. For $t\in[0,T)$, $x_0\in\dbR^n$, and $u(\cd)\in L^2_{\dbF}(0,T;\dbR^m)$, we consider the following SDE
\bel{The-equation-for-cX}\left\{\2n\ba{ll}
\ns\ds
d\cX(s)=\Big[\sA(s)\cX(s)+\sB (s) u(s) +b(s)\Big]ds+\Big[\sC(s)\cX(s)+\sD (s) u(s) +\si(s)\Big]dW(s),\q
s\in[0,T],\\
\ns\ds \cX(0)=x,\ea\right.\ee
the solvability of which is easy to check under (H1). Unlike the state process $X(\cd)$, $\cX(\cd)$ does not rely on $t$. It is easy to see that
$$\dbP\big\{\omega\in\Omega; \cX(s,\omega)=X(s,\omega),\ \forall s\in[0,t]\big\}=1,$$
where $X(\cd)$ satisfies (\ref{standard-state-equation}).
Given $\cX(\cd)$ satisfying (\ref{The-equation-for-cX}), we denote by
\bel{Definition-of-cM-cN}\ba{ll}
\ns\ds \cM(s,t):=\cP_1(s)\cX(s)+\cP_2(s)\dbE_t\cX(s)+\dbE_t\cP_3(s)+\cP_4(s),\  \ s\in[t,T],\\
\ns\ds \cN(s):=\cP_1(s)(\sC(s) \cX(s)+\sD(s) u(s)+\si(s))+\cL_4(s),\ \ s\in[0,T].
\ea\ee
\bl\label{Lemma-equality}
If $ u(\cd)\in L^2_{\dbF}(0,T;\dbR^m)$. Then for any $t\in[0,T)$,
\bel{The-result-of-lemma-equality}\ba{ll}
\ns\ds \lim_{\e\rightarrow0}\frac 1 \e\dbE_t\int_t^{t+\e}\Big[\sB(s)^{\top}\big[Y_0(s)\g_1 X(t)+\sY(s,t)\big]
+\sD(s)^{\top} \sZ(s,t)\Big]ds\\
\ns\ds =\lim_{\e\rightarrow0}\frac 1 \e\dbE_t\int_t^{t+\e}\Big[\sB(s)^{\top}\big[Y_0(s)\g_1 \cX(s)+\cM(s,s)\big]
+\sD(s)^{\top} \cN(s)\Big]ds.
\ea\ee
%
%
\el

\begin{proof}
First we observe that
\bel{Equality-for-s-B-s-D-0}\ba{ll}
\ns\ds \sB^{\top}  \sY+\sD^{\top}  \sZ \\
\ns\ds=\sB^{\top}\Big[\cP_1 X+\cP_2\dbE_t X+\dbE_t \cP_3+\cP_4\Big] +\sD^{\top}\Big[\cP_1(C X +\wt C\dbE_t X+D  u+\wt D\dbE_t u +\si)+\cL_4\Big]\\
\ns\ds=F_1+ (\sB^{\top}\cP_2+\sD^{\top}\cP_1\wt C) \dbE_t X+\sB^{\top} \dbE_t\cP_3+\sD^{\top}\cP_1\wt D\dbE_t u,
\ea\ee
where
$$
\ba{ll}
\ns\ds  F_1:=\sD^{\top}\big[\cP_1[C  X+D  u+\si]+\cL_4\big] + \sB^{\top}\big[\cP_1 X+\cP_4\big].
\ea
$$
Therefore,
\bel{Integrality-for-s-B-sD-0}\ba{ll}
\ns\ds
\dbE_t\int_t^{t+\e}\big[ \sB^{\top} \sY+ \sD^{\top} \sZ\big]ds =\dbE_t\int_t^{t+\e}F_1 ds+
\dbE_t\int_t^{t+\e}\big( \sB^{\top}\cP_2+\sD^{\top}\cP_1\wt C) Xds\\
\ns\ds\qq\qq\qq\qq\qq\qq\q
+\dbE_t\int_t^{t+\e} \sB^{\top}\cP_3ds+\dbE_t\int_{t}^{t+\e}\sD^{\top}\cP_1\wt D uds.
\ea\ee
Moreover, for any $t\in[0,T)$,
\bel{Inequality-of-Y-0-0}\ba{ll}
\ns\ds \lim_{\e\rightarrow 0} \Big[\frac 1 \e \dbE_t\int_t^{t+\e} \sB^{\top}Y_0\gamma_1\big[ X(s)-X(t)\big]ds\Big]\leq K\lim_{\e\rightarrow 0} \dbE_t\sup_{s\in[t,t+\e]}| X(s)- X(t)|\rightarrow0,\ \ \e\rightarrow0.
\ea\ee
As a result,
\bel{Limit-dbH-two-0}\ba{ll}
\ns\ds  \lim_{\e\rightarrow0}\frac 1 \e\dbE_t\int_t^{t+\e}\Big[\sB(s)^{\top}\big[Y_0(s)\g_1 X(t)+\sY(s,t)\big]
+\sD(s)^{\top} \sZ(s,t)\Big]ds=\lim_{\e\rightarrow0}\frac 1 \e \dbE_t\int_t^{t+\e}\dbG_0(s, X(s))ds,
\ea\ee
where
$$\left\{\ba{ll}
\ns\ds\dbG_0(s, x):=\Big[\sD^{\top}\big[\cP_1[C  x+D u+\si]
+\cL_4\big]+\sB^{\top}\big[\cP_1 x+\cP_4\big]\\
\ns\ds\qq \qq\qq +(\sB^{\top}\cP_2+\sD^{\top}\cP_1\wt C) x+\sB^{\top}
\cP_3+\sD^{\top}\cP_1\wt D u+\sB^{\top}Y_0\gamma_1 x\Big]\\
\ns\ds\qq\qq\ = \sD^{\top}\cP_1\sD u+\cK_1 x+\sD^{\top}\cP_1\si+\sD^{\top}\cL_4
+\sB^{\top}(\cP_4+\cP_3),\\
\ns\ds \cK_1:=\sD^{\top}\cP_1\sC+\sB^{\top}\big(\cP_1+ \cP_2+ Y_0\gamma_1\big).
\ea\right.
$$
On the other hand, by (\ref{Definition-of-cM-cN}), for any $t\in[0,T)$, we have
\bel{Limit-cM-cN-two-0}\ba{ll}
\ns\ds  \lim_{\e\rightarrow0}\frac 1 \e\dbE_t\int_t^{t+\e}\Big[\sB(s)^{\top}\big[Y_0(s)\g_1 \cX(s)+\cM(s,s)\big]
+\sD(s)^{\top} \cN(s)\Big]ds=\lim_{\e\rightarrow0}\frac 1 \e \dbE_t\int_t^{t+\e}\dbG_0(s, \cX(s))ds.
\ea\ee
Recall the equation of $\cX(\cd)$ and $X(\cd)$, for any $t\in[0,T)$ we know that
$$\ba{ll}
\ns\ds \big[X(s)-\cX(s)\big]=\int_t^{s}\Big[\sA(X-\cX)+\wt A(\dbE_t X-X)+\wt B(\dbE_t u-u)\Big]dr\\
\ns\ds\qq\qq \qq\qq +\int_t^{s}\Big[\sC(X-\cX)+\wt C(\dbE_t X-X)+\wt D(\dbE_t u-u)\Big]dW(r).
\ea
$$
As a result, we see that $\dbE_t\big[X(s)-\cX(s)\big]=0$ with $s\in[t,t+\e)$. Hence
$$\ba{ll}
\ns\ds \lim_{\e\rightarrow0}\frac 1 \e \dbE_t\int_t^{t+\e}\dbG_0(s, X(s))ds=\lim_{\e\rightarrow0}\frac 1 \e \dbE_t\int_t^{t+\e}\dbG_0(s, \cX(s))ds.
\ea
$$
Our conclusion is followed by (\ref{Limit-dbH-two-0}) and (\ref{Limit-cM-cN-two-0}).
\end{proof}
\br
Thanks to the Markovian framework and the appearance of conditional expectation operator $\dbE_{\cd}$, we obtain Lemma \ref{Lemma-equality} and transform the investigation of $(X,\sY,\sZ)$ into that of $(\cX,\cM,\cN)$.
\er
\br
If $\wt A=\wt B=\wt C=\wt D=0,$ then $(X,Y,Z)\equiv(\cX,\cM,\cN)$, and Lemma \ref{Lemma-equality} becomes trivial. In other words, above conclusion is necessary only when the mean-field terms in state equation appears.
\er

At this moment, we are ready to give the characterization of open-loop equivalent controls.
\bt\label{Equivalence-open}
Suppose (H1) holds, $Y_0(\cd )$ is in (\ref{Definition-of-Y-0}), $\h P_1(\cd)$ satisfies (\ref{Equation-for-hat-P-1}). Then $\bar u(\cd)$ is an open-loop equilibrium control associated with initial state $x\in\dbR^n$ if and only if
\bel{necessity-condition-on-R-D-c-K}\ba{ll}
\ns\ds \sR(s)-\sD(s)^{\top}\h P_1(s)\sD(s)\geq0,\qq s\in[0,T], \ \ a.e. \ \
\ea\ee
and given $(\bar \cM,\bar\cN)$ in (\ref{Definition-of-cM-cN}) associated with $\bar u(\cd) $,
\bel{One-necessary-general-condition-0}
\ba{ll}
\ns\ds \Big[\sR(s) \bar u(s)- \sB(s)^{\top}\big[Y_0(s)\g_1\bar\cX(s)+\bar \cM(s,s)\big]
-\sD(s)^{\top} \bar \cN(s)\Big]=0, \ \ s\in[0,T]. \ \ a.e.
\ea
\ee
\et
\begin{proof}
From Lemma \ref{first-preparing-lemma}, Lemma \ref{Second-preparing-lemma}, Lemma \ref{representation-for-Y-Z-*}, $\bar u(\cd)$ is an equilibrium control if and only if
\bel{Inequality-condition-for-J-0}\ba{ll}
\ns\ds 0\leq \lim_{\e\rightarrow0}\frac{J(t,\bar X(t);u^{v,\e}(\cd))-J\big(t,\bar X(t);\bar u(\cd)\big)}{\e}
=\lan \sP_0(t) v,v\ran+\lan \sH_0(t),v\ran,
\ea\ee
with any $v\in L^2_{\cF_t}(\Omega;\dbR^m)$. Here $\sP_0(\cd),\ \sH_0(\cd)$ are defined as,
\bel{Some-definitions-of-sP-sH}\left\{\ba{ll}
\ns\ds \sP_0(t):=\lim_{\e\rightarrow0}\frac{1}{2\e}  \int_t^{t+\e}
\big[\sR(s)-\sD(s) ^{\top}\h P_1(s)\sD(s) \big]ds, \ \ \sH_0(t):=\lim_{\e\rightarrow0}\frac 1 \e
\dbE_t\int_t^{t+\e}\dbH_0(s,t)ds,\\
\ns\ds \dbH_0(s,t):=\sR(s) \bar u(s)-\sB(s)^{\top}\big[Y_0(s)\g_1 \bar X(t)+\bar\sY(s,t)\big]
-\sD(s)^{\top} \bar\sZ(s,t),
\ea\right.\ee
and $(\bar\sY,\bar\sZ)$ is in  (\ref{Arbitrary-s-Y-Z-general}) associated with $\bar u(\cd)$.
Furthermore, given $t\in[0,T]$, condition (\ref{Inequality-condition-for-J-0}) holds if and only if both the first-order necessary condition $\sH_0(t)=0$ and second-order condition $\sP_0(t)\geq0$ are true. In other words,
$$
\ba{ll}
\ns\ds \lim_{\e\rightarrow0}\frac{1}{2\e}  \int_t^{t+\e}
\big[\sR(s)-D(s)^{\top}\h P_1(s)D(s)\big]ds\geq 0,\ \ \lim_{\e\rightarrow0}\frac 1 \e
\dbE_t\int_t^{t+\e}\dbH_0(s,t)ds=0.
\ea$$

$\Longrightarrow$
Since both $\sR(\cd)$ and $\h P_1(\cd)$ are bounded and deterministic, we thus know that
$$
0\leq \lim_{\e\rightarrow0}\frac{1}{2\e}  \int_t^{t+\e}
\big[\sR(s)-D(s)^{\top}\h P_1(s)D(s)\big]ds=\sR(t)-D(t)^{\top}\h P_1(t)D(t),\ \ t\in[0,T].\ \ a.e.
$$
If $\sH_0(t)=0$, by Lemma \ref{Lemma-equality}, for any $t\in[0,T)$, one has
\bel{}\ba{ll}
\ns\ds \lim_{\e\rightarrow0}\frac 1 \e\dbE_t\int_t^{t+\e}\Big[\sR(s)\bar u(s)-\sB(s)^{\top}\big[Y_0(s)\g_1 \bar\cX(s)+\bar\cM(s,s)\big]
-\sD(s)^{\top} \bar\cN(s)\Big]ds=0.
\ea\ee
By means of Lemma 3.4 in \cite{Hu-Jin-Zhou-2017}, we see that (\ref{One-necessary-general-condition-0}).

$\Longleftarrow$ If (\ref{One-necessary-general-condition-0}) is true with some $\bar u(\cd)$, we immediately obtain $\sH_0(t)=0$ by means of Lemma \ref{Lemma-equality}. Moreover, (\ref{necessity-condition-on-R-D-c-K}) leads to $\sP_0(t)\geq 0$ with any $t\in[0,T)$. Hence we see that $\bar u(\cd)$ is an equilibrium control.
\end{proof}
In our context, above (\ref{One-necessary-general-condition-0}) and (\ref{necessity-condition-on-R-D-c-K}) are named as the $first$-$order$, $second$-$order$ $equilibrium$ $condition$, respectively.
\br
To our best, the introduced $second$-$order$ $equilibrium$ $condition$ (\ref{necessity-condition-on-R-D-c-K}) has not been discussed in other papers on time inconsistent optimal control problems. Moreover, it is the same as second-order necessary optimality condition of mean-field SDEs (\cite{Buckdahn-Djehiche-Li-2011}), which takes us by surprise.
\er
\br
If the mean-field terms in the state equation disappears, Theorem \ref{Equivalence-open} reduces to the counterparts in \cite{Wang-2017-JDE}. If moreover $\wt R,\ \wt Q=0$, $Q,\ R,\ G$ are definite, then (\ref{necessity-condition-on-R-D-c-K}) is obvious to see, and Theorem \ref{Equivalence-open} becomes consistent with Theorem 3.5 in \cite{Hu-Jin-Zhou-2017}.
\er

\subsection{The case of equilibrium controls with closed-loop representations}

In this part, we characterize the closed-loop representations of open-loop equilibrium controls in sense of Definition \ref{Definition-2}.

For any $(\Th,\f)\in L^2(0,T;\dbR^{m\times n})\times L^2_{\dbF}(0,T;\dbR^m)$, we define
\bel{Simplified-notations-2}\ba{ll}
\ns\ds \cA:=A+B\Th,\  \ \wt\cA:=\wt A+\wt B\Th,\  \ \cC:=C+D\Th,\ \ \wt\cC:=\wt C+\wt D\Th.
\ea\ee
We first give the equation satisfied by $(\cM,\cN)$.
Recall (\ref{Equations-for-P-i-in-uniqueness}), using It\^{o}'s formula, we see that,
\bel{Backward-system-for-cY-cZ}\left\{\!\!\ba{ll}
\ns\ds  d\cM
=-\Big[A^{\top}\cM+C^{\top}\cN+\wt A^{\top}\dbE_t\cM+\wt C^{\top}\dbE_t\cN+Q_1\cX+Q_2\dbE_t\cX+Q_3 u+Q_4\dbE_t u \Big] ds +\cN dW(s),\\
\ns\ds \cM(T,t)=-G\cX(T)-\wt G\dbE_t \cX(T)-\gamma_2,
\ea\right.\ee
where $u(\cd)\in L^2_{\dbF}(0,T;\dbR^m)$, $Q_i$ are bounded deterministic functions defined as,
\bel{Definitions-of-Q-1--4}\ba{ll}
\ns\ds
Q_1:=-(Q+C^{\top}\cP_1\wt C+\cP_1\wt A),\ \ Q_2:=-(\wt Q-C^{\top}\cP_1\wt C-\cP_1\wt A), \\
\ns\ds Q_3:=-(C^{\top}\cP_1\wt D+\cP_1\wt B),\ \ Q_4:=(C^{\top}\cP_1\wt D+\cP_1\wt B).
\ea
\ee
Next we decouple the forward-backward system of $(\cX,\cM,\cN)$ if $u(\cd)=\Th(\cd)\cX(\cd)+\f(\cd)$ with proper $(\Th,\f)$. Using similar ideas as the ones from (\ref{Representation-for-Y-*}) to (\ref{Equations-for-P-i-in-uniqueness}), for  $s\in[0,T]$, we consider the following systems of equations
\bel{open-loop-closed-loop-representation}\left\{
\ba{ll}
\ns\ds dP_1=- \Big[P_1\sA+P_1\sB\Th+A^{\top}P_1+C^{\top}P_1(\sC+\sD\Th)-Q_1-Q_3\Th\Big]ds,\ \ \\
\ns\ds dP_2=-\Big\{P_2\sA+P_2\sB\Th+\sA^{\top}P_2-Q_2-Q_4\Th+\wt A^{\top}P_1 +\wt C ^{\top} P_1(\sC+\sD\Th)\Big\}ds,\\
\ns\ds dP_{3}=-\Big[\sA^{\top}P_3+P_2(\sB \f+b)-Q_4\f+\wt A^{\top} P_4+\wt C^{\top}P_1\sD\f+\wt C^{\top}P_1\si+\wt C^{\top}\L_4\Big]ds+\L_{3}dW(s),\\
dP_4=-\Big\{A^{\top}P_4+C^{\top}\L_4+P_1(\sB \f+b)+C^{\top}P_1\sD\f+C^{\top} P_1\si-Q_3\f\Big\}ds+\L_4dW(s),
\\
\ns\ds P_1(T)=-G,\ P_2(T)=-\wt G,\ P_3(T)=0,\ P_4(T)=-\gamma_2,
\ea\right.\ee
where $Q_i(\cd)$ are defined in (\ref{Definitions-of-Q-1--4}). By the standard theory of BSDEs, it is easy to see
\bel{regularity-of-P-i}
\ba{ll}
\ns\ds P_1(\cd),\ P_2(\cd)\in C([0,T];\dbR^{n\times n}),\ (P_{3 },\L_{3}),(P_{4},\L_{4})\in  L^2_{\dbF}(\Omega;C([0,T];\dbR^{n}))\times L^2_{\dbF}(0,T;\dbR^{n}).
\ea
\ee
For $s\in[0,T]$ and $t\in[0,s]$, we define
\bel{Arbitrary-s-Y-Z-general-0}\ba{ll}
\ns\ds  \sM(s,t):=P_1(s)\cX(s)+  P_2(s)\dbE_t\cX(s)+\dbE_t P_{3}(s)+ P_{4}(s),\\
\ns\ds \sN(s):= P_1(s)\big(\sC(s)+\sD(s)\Th(s)\big) \cX(s) +P_1(s)\sD(s) \f (s)+P_1(s)\si(s)+\L_{4}(s).
\ea
\ee
For any $t\in[0,T)$, it is easy to see that $(\sM,\sN)\in L^2_{\dbF}(\Omega;C([t,T];\dbR^n))\times L^2_{\dbF}(0,T;\dbR^n)$.
\bl\label{representation-for-Y-Z-*-0-new}
\rm Suppose $(\cX,\cM,\cN)$ is the unique pair of solution for forward-backward system (\ref{The-equation-for-cX}), (\ref{Backward-system-for-cY-cZ}), and $u(\cd)$ admits a linear form of $u(\cd)= \Th(\cd) \cX(\cd)+\f(\cd)$, where $(\Th(\cd),\f(\cd))\in L^2(0,T;\dbR^{m\times n})\times L^2_{\dbF}(0,T;\dbR^m)$. Then for any $t\in[0,T]$,
\bel{representation-for-any-y-z}\ba{ll}
\ns\ds   \dbP\Big\{\omega\in\Omega;\ \cM(s,t)=\sM(s,t),\ \ \forall s\in[t,T] \Big\}=1,\\
\ns\ds \dbP\Big\{\omega\in\Omega;\  \cN(s) = \sN(s)\Big\}=1,\ \ s\in[0,T]. \ a.e.
\ea
\ee
\el
\begin{proof}
Given (\ref{Arbitrary-s-Y-Z-general-0}), it is easy to see that
$$\left\{
\ba{ll}
\ns\ds \dbE_t\sM =(P_1+P_2)\dbE_t\cX+  \dbE_t[P_3+P_4],\\
\ns\ds \dbE_t\sN =P_1\big(\sC+\sD\Th\big) \dbE_t\cX +P_1\sD \dbE_t\f +P_1\dbE_t\si+\dbE_t\L_{4}.
\ea\right.$$
For notational simplicity, we define the generators in (\ref{open-loop-closed-loop-representation}) as $\Pi_i$, $i=1,2,3,4$. In other words,
$$
\ba{ll}
\ns\ds  d P_i(s)=\Pi_{i}(s)ds,\ \ i=1,2, \ \ d P_j(s)=\Pi_j(s)ds+\L_j(s)dW(s),\ \ j=3,4.
\ea
$$
Using It\^{o}'s formula, we derive that
$$\left\{
\ba{ll}
\ns\ds d\big[  P_1 \cX\big]=\Big[\Pi_{1} \cX+ P_1(\sA \cX  +\sB\Th \cX+\sB\f +b)\Big]ds
 + P_1\big(\sC \cX+
\sD \Th \cX+\sD\f  +\si\big)dW(s),\\
\ns\ds d\big[P_2\dbE_t \cX\big]=\Big\{\Pi_2 \dbE_t \cX+P_2 \big[(\sA+\sB\Th)\dbE_t\cX+\sB\dbE_t\f+\dbE_tb\big]\Big\}ds.
\ea\right.
$$
As a result, we have
$$
\ba{ll}
\ns\ds d\sM=\Big\{\Pi_{1} \cX+ P_1(\sA+\sB\Th)\cX+P_1\sB \f+P_1b
+\Pi_2 \dbE_t \cX
\\
\ns\ds\qq\ +P_2 \big[(\sA+\sB\Th)\dbE_t\cX+\sB\dbE_t\f+\dbE_tb\big]
+\dbE_t\Pi_3+\Pi_4 \Big\}ds\\
\ns\ds\qq\ +\Big[ P_1\big(\sC \cX +\sD \Th \cX+\sD\f +\si\big)+\L_{4}\Big]dW(s)\\
\ns\ds\qq =\Big\{(\Pi_1+P_1\sA+P_1\sB\Th)\cX+(\Pi_2+P_2\sA+P_2\sB\Th)\dbE_t\cX
+\dbE_t\Big[P_2(\sB \f+b)+\Pi_3\Big]\\
\ns\ds\qq\q +P_1(\sB \f+b)+\Pi_4
\Big\}ds+\Big[ P_1\big(\sC \cX +\sD\Th \cX+\sD \f +\si\big)+\L_{4}\Big]dW(s).
\ea
$$
On the other hand, by the previous representations,
$$
\ba{ll}
\ns\ds -
A^{\top}\sM-C^{\top}\sN -
\wt A^{\top}\dbE_t\sM-\wt C^{\top}\dbE_t\sN+Q_1\cX+ Q_2\dbE_t \cX+Q_3 (\Th \cX+\f)+Q_4\dbE_t (\Th \cX+\f)\\
\ns\ds= -A^{\top}\Big\{P_1\cX+  P_2\dbE_t\cX +\dbE_t P_{3}+ P_{4} \Big\}
-C^{\top} \Big\{P_1\big(\sC+\sD\Th\big) \cX +P_1\sD \f +P_1\si+\L_{4}\Big\}\\
\ns\ds\q-\wt A^{\top}\Big[(P_1+P_2)\dbE_t \cX +\dbE_t \big[P_{3}+P_{4}\big]\Big]
-\wt C^{\top}\Big[P_1\big(\sC+\sD\Th\big)\dbE_t \cX +P_1\sD \dbE_t\f +P_1\dbE_t\si+\dbE_t\L_{4} \Big]\\
\ns\ds\qq +(Q_1+Q_3\Th)\cX+ (Q_2+Q_4\Th)\dbE_t\cX +Q_3\f+Q_4\dbE_t\f\\
\ns\ds =-\Big[A^{\top}P_1+C^{\top}P_1(\sC+\sD\Th)-Q_1-Q_3\Th\Big]\cX\\
\ns\ds
-\Big[A^{\top}P_2-Q_2-Q_4\Th+\wt A^{\top}(P_1+P_2)+\wt C ^{\top} P_1(\sC+\sD\Th)\Big]\dbE_t\cX\\
\ns\ds
-\dbE_t\Big[A^{\top}P_3-Q_4\f+\wt A^{\top}(P_3+P_4)+\wt C^{\top}P_1\sD\f+\wt C^{\top}P_1\si+\wt C^{\top}\L_4\Big]\\
\ns\ds
-A^{\top}P_4-C^{\top}P_1\sD\f-C^{\top}(P_1\si+\L_4)+Q_3\f.
\ea
$$
At this moment, by the choice of $\Pi_i(\cd)$, as well as (\ref{Arbitrary-s-Y-Z-general-0}), we conclude that $(\sM,\sN)$ satisfies the backward equation in (\ref{Backward-system-for-cY-cZ}) with $t\in[0,T]$. By the uniqueness of BSDEs, we see the conclusion.
\end{proof}
Now we give the main result of this part.
\begin{theorem}\label{characterization}
Suppose (H1) holds true, $\h P_1(\cd)$ satisfies (\ref{Equation-for-hat-P-1}). For any $x\in\dbR^n$, the linear quadratic problem admits an open-loop equilibrium control $u^*(\cd)$ in the sense of Definition \ref{Definition-2} associated with $(\Th^*,\f^*)\in L^2(0,T;\dbR^{m\times n})\times L^2_{\dbF}(0,T;\dbR^m)$ if and only if (\ref{necessity-condition-on-R-D-c-K}) is true
and there exist $ P_1^*(\cd),P_2^*(\cd),$ $(P_3^*(\cd),\L^*_3(\cd))$, $(P_4^*(\cd),\L^*_4(\cd))$ satisfying BSDEs of (\ref{open-loop-closed-loop-representation}) with
\bel{representations-for-Th-f}\ba{ll}
\ns\ds \Th^*=\big[\sR-\sD^{\top}P_1^* \sD\big]^{\dagger}\[\sB^{\top}(P_1^*+P_2^*+Y_0 \g_1)+\sD^{\top}P_1^* \sC\]\\
\ns\ds\qq +\Big\{I-\big[\sR-\sD^{\top}P_1^*\sD\big]^{\dagger}\big[\sR-\sD^{\top}P_1^*\sD\big]\Big\}\th,\\
\ns\ds \f^*= \Big[\big[\sR-\sD^{\top}P_1^*\sD\big]^{\dagger}\[\sB^{\top}[P_4^*+P_3^*]
+\sD^{\top}[P_1^*\si +\L_4^*]\]\\
\ns\ds \qq +\Big\{I-\big[\sR-\sD^{\top}P_1^*\sD\big]^{\dagger}\big[\sR-\sD^{\top}P_1^*\sD\big]\Big\}\f,
\ea\ee
such that the following hold,
\bel{Some-furthermore-condition-on}\ba{ll}
\ns\ds \cR\Big(\sB^{\top}(P_1^*+P_2^*+Y_0 \g_1)+\sD^{\top}P_1^* \sC\Big)\subset \cR\Big(\sR-\sD^{\top}P_1^* \sD\Big),\ \ a.e. \\
\ns\ds \Big[\sB^{\top}[P_4^*+P_3^*]
+\sD^{\top}[P_1^*\si +\L_4^*]\Big]\in \cR\Big(\sR-\sD^{\top}P_1^* \sD\Big), \ a.e. \ a.s. \\
\ns\ds \big[\sR-\sD^{\top}P_1^* \sD\big]^{\dagger}\[\sB^{\top}(P_1^*+P_2^*+Y_0 \g_1)+\sD^{\top}P_1^* \sC\]\in L^2(0,T;\dbR^{m\times n}),\\
\ns\ds \Big[\big[\sR-\sD^{\top}P_1^*\sD\big]^{\dagger}\[\sB^{\top}[P_4^*+P_3^*]
+\sD^{\top}[P_1^*\si +\L_4^*]\] \in L^2_{\dbF}(0,T;\dbR^m).
\ea\ee
Here $\th(\cd)\in L^2(0,T;\dbR^{n\times n})$, $\f(\cd)\in L^2_{\dbF}(0,T;\dbR^m) $, $A^{\dagger}$ represents the pseudo-inverse of matrix $A$.
\end{theorem}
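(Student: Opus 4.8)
The plan is to specialize the general characterization of Theorem~\ref{Equivalence-open} to controls of the feedback form $u^*=\Th^*\sX^*+\f^*$ demanded by Definition~\ref{Definition-2}. Since the second-order condition (\ref{necessity-condition-on-R-D-c-K}) appears verbatim in both statements, it carries over unchanged; the entire work lies in rewriting the first-order condition (\ref{One-necessary-general-condition-0}) in terms of the feedback data $(\Th^*,\f^*)$ and the processes $P_i^*$ solving (\ref{open-loop-closed-loop-representation}).

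First I would observe that when $u^*=\Th^*\sX^*+\f^*$, the equation (\ref{The-equation-for-cX}) for $\cX$ becomes precisely the closed-loop state equation (\ref{Closed-loop-representation-state-equation}), so $\bar\cX=\sX^*$ and Lemma~\ref{representation-for-Y-Z-*-0-new} applies: $(\bar\cM,\bar\cN)$ admits the representation $(\sM,\sN)$ of (\ref{Arbitrary-s-Y-Z-general-0}) built from $(P_1^*,P_2^*,P_3^*,P_4^*,\L_4^*)$. Evaluating on the diagonal $t=s$ and using that $\sX^*(s)$ and $P_3^*(s)$ are $\cF_s$-measurable, so that $\dbE_s\sX^*(s)=\sX^*(s)$ and $\dbE_sP_3^*(s)=P_3^*(s)$, the conditional expectations collapse and $\bar\cM(s,s)=(P_1^*+P_2^*)\sX^*+P_3^*+P_4^*$, while $\bar\cN=\sN$. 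Substituting these together with $u^*=\Th^*\sX^*+\f^*$ into (\ref{One-necessary-general-condition-0}), I collect the outcome into a part linear in $\sX^*$ with \emph{deterministic} coefficient
$$K:=(\sR-\sD^\top P_1^*\sD)\Th^*-\sB^\top(Y_0\g_1+P_1^*+P_2^*)-\sD^\top P_1^*\sC,$$
plus an $x$-independent remainder
$$L:=(\sR-\sD^\top P_1^*\sD)\f^*-\sB^\top(P_3^*+P_4^*)-\sD^\top(P_1^*\si+\L_4^*),$$
so that (\ref{One-necessary-general-condition-0}) reads $K\sX^*+L=0$ a.e.\,a.s.

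The decisive step is the decoupling of $K$ and $L$, and this is where I expect the main difficulty. Since by Definition~\ref{Definition-2} the pair $(\Th^*,\f^*)$ is independent of $x$, I vary the initial state: writing $\sX^*(s)=\Phi(s)x+\sX^*_0(s)$, where $\Phi$ is the a.s. invertible fundamental matrix of the homogeneous part of (\ref{Closed-loop-representation-state-equation}) and $\sX^*_0$ is the solution started from the origin, the identity $K\sX^*+L=0$ holding for every $x\in\dbR^n$ forces $K\Phi\equiv0$; as $K$ is deterministic while $\Phi(s)$ is invertible almost surely, this yields $K=0$ a.e., and then $L=0$ a.e.\,a.s. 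These are two linear equations sharing the same \emph{symmetric} coefficient $\sR-\sD^\top P_1^*\sD$, one for $\Th^*$ and one for $\f^*$. It is exactly here that the ``independent of $x$'' requirement of Definition~\ref{Definition-2} is essential.

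Finally I would invoke the solvability theory for linear equations: $MX=N$ with $M$ symmetric is solvable iff $\cR(N)\subset\cR(M)$, in which case $X=M^\dagger N+(I-M^\dagger M)\th$ for free $\th$. Applying this to $K=0$ and $L=0$ gives precisely the range inclusions and the representation formulas (\ref{representations-for-Th-f}), while the requirement $u^*\in L^2_{\dbF}(0,T;\dbR^m)$ produces the remaining $L^2$-integrability clauses of (\ref{Some-furthermore-condition-on}). For the converse direction I would run these steps backward: given $P_i^*$ and $(\Th^*,\f^*)$ of the prescribed form satisfying (\ref{Some-furthermore-condition-on}), one checks directly that $K\equiv0$ and $L\equiv0$, hence the first-order condition (\ref{One-necessary-general-condition-0}) holds; combined with (\ref{necessity-condition-on-R-D-c-K}), Theorem~\ref{Equivalence-open} then guarantees that $u^*=\Th^*\sX^*+\f^*$ is an open-loop equilibrium control possessing the closed-loop representation of Definition~\ref{Definition-2}.
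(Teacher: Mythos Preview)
Your proposal is correct and follows essentially the same route as the paper: reduce via Theorem~\ref{Equivalence-open} and Lemma~\ref{representation-for-Y-Z-*-0-new} to the identity $K\sX^*+L=0$, then decouple by varying $x$ through the invertible fundamental solution (your $\Phi$ is the paper's $\sX^*_1$), and finally apply the pseudo-inverse description of solutions to the two resulting linear equations. The converse is handled identically.
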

\begin{proof}
$\Longrightarrow$ Given $(\Th^*,\f^*)$, from (\ref{regularity-of-P-i}), we see the existence of $(P_1^*,P_2^*), \ (P^*_3(\cd),\L^*_3(\cd)),\ (P^*_4(\cd),\L^*_4(\cd))$ satisfying (\ref{open-loop-closed-loop-representation}).
By Theorem \ref{Equivalence-open}, we only need to prove (\ref{representations-for-Th-f}) , (\ref{Some-furthermore-condition-on}). From (\ref{One-necessary-general-condition-0}), (\ref{Arbitrary-s-Y-Z-general-0}) and Lemma \ref{representation-for-Y-Z-*-0-new},
\bel{To-obtain-the-representation-of-two-1}
\ba{ll}
\ns\ds  0=\Big[\sR(s) u^*(s)- \sB(s)^{\top}\big[Y_0(s)\g_1 \sX^*(s)+  \cM^*(s,s)\big]
-\sD(s)^{\top}   \cN^*(s)\Big]\\
\q=\Big[\sR(s) u^*(s)- \sB(s)^{\top}\big[Y_0(s)\g_1 \sX^*(s)+  \sM^*(s,s)\big]
-\sD(s)^{\top}   \sN^*(s)\Big]\\
\ns\ds \q=\Big[\big[\sR-\sD^{\top}P_1^*\sD\big]\Th^*-
\sB^{\top}\big[Y_0\gamma_1+P_1^*+P_2^*\big]-\sD P_1^*\sC\Big]\sX^*\\
\ns\ds\qq+\big[\sR-\sD^{\top}P_1^*\sD\big]\f^*-\sD^{\top}\big[P_1^*\si+\L_4^*\big]-\sB^{\top}
\big[P_3^*+P_4^*\big],
\ea
\ee
where $\sX^*(\cd)$ satisfies (\ref{Closed-loop-representation-state-equation}). Notice (\ref{To-obtain-the-representation-of-two-1}) holds true for any $x\in\dbR^n$. We denote by $\sX^*_0(\cd)$ be the solution of (\ref{Closed-loop-representation-state-equation}) associated with initial state $x=0$. Then one has
$$\ba{ll}
\ns\ds \Big[\big[\sR-\sD^{\top}P_1^*\sD\big]\Th^*-
\sB^{\top}\big[Y_0\gamma_1+P_1^*+P_2^*\big]-\sD P_1^*\sC\Big](\sX^*-\sX^*_0)=0.
\ea
$$
At this moment, given unit matrix $I\in\dbR^{n\times n}$, we consider the following equation
\bel{Closed-loop-state-equation-standard}\left\{\2n\ba{ll}
\ns\ds
d\sX^*_1(s)= \big[\sA(s)+\sB(s)\Th^*(s)\big]\sX^*_1(s) ds + \big[\sC(s)+\sD(s)\Th^*(s)\big]\sX^*_1(s) dW(s),\q
s\in[0,T],\\
\ns\ds \sX^*_1(0)=I.\ea\right.\ee
It is easy to see the solvability of $\sX^*_1(\cd)$. Moreover, $\big[\sX^*_1\big]^{-1}(\cd)$ exists. By the uniqueness of SDEs, for any $x\in\dbR^n$ we know that
$$
\ba{ll}
\ns\ds \dbP\big\{\omega;\ \sX^*(s,\omega)-\sX^*_0(s,\omega)=\sX^*_1(s,\omega)x,\ \forall s\in[0,T] \big\}=1.
\ea
$$
By the existence of $\big[\sX^*_1\big]^{-1}(\cd)$, we know that
\bel{One-equality-to-obtain-Th}\ba{ll}
\ns\ds \Big[\big[\sR-\sD^{\top}P_1^*\sD\big]\Th^*-
\sB^{\top}\big[Y_0\gamma_1+P_1^*+P_2^*\big]-\sD P_1^*\sC\Big]=0,
\ea
\ee
which then leads to
\bel{One-equality-to-obtain-varphi}
\ba{ll}
\ns\ds \big[\sR-\sD^{\top}P_1^*\sD\big]\f^*-\sD^{\top}\big[P_1^*\si+\L_4^*\big]-\sB^{\top}
\big[P_3^*+P_4^*\big]=0.
\ea
\ee
Then we can see the conclusions in (\ref{representations-for-Th-f}), (\ref{Some-furthermore-condition-on}).

$\Longleftarrow$ For any $x\in\dbR^n$, we define
$u^*(\cd):=\Th^*(\cd) \sX^*(\cd)+\f^*(\cd)$, where $\sX^*(\cd)$ satisfies (\ref{Closed-loop-representation-state-equation}).
By the choice of $\Th^*(\cd),\f^*(\cd)$, we have above (\ref{To-obtain-the-representation-of-two-1}). The conclusion is easy to see via Theorem \ref{Equivalence-open}.
\end{proof}
\br
As to systems of equations (\ref{open-loop-closed-loop-representation}), we look at the special case when $\wt A=\wt B=\wt C=\wt D=0$,
\bel{Special-case-I}\left\{
\ba{ll}
\ns\ds dP_1=- \Big[P_1 A+A^{\top}P_1+C^{\top}P_1C-Q+(P_1 B+C^{\top}P_1D)\Th\Big]ds,\ \ \\
\ns\ds dP_2=-\Big\{P_2 A+A^{\top}P_2-\wt Q+P_2 B\Th\Big\}ds,\\
\ns\ds dP_{3}=-\Big[ A^{\top}P_3+P_2(B \f+b)\Big]ds+\L_{3}dW(s),\\
dP_4=-\Big\{A^{\top}P_4+C^{\top}\L_4+P_1(B \f+b)+C^{\top}P_1 D\f+C^{\top} P_1\si \Big\}ds+\L_4dW(s),
\\
\ns\ds P_1(T)=-G,\ P_2(T)=-\wt G,\ P_3(T)=0,\ P_4(T)=-\gamma_2,
\ea\right.\ee

$\bullet$ If $b=\si=\gamma_2=0$, $\f=0,$ then $(P_3,P_4)\equiv(0,0)$. The equations of $(P_1^*,P_2^*)$ associated with $\Th^*$ is just the system of Riccati equations in \cite{Yong-2017}.

$\bullet$ If $b=\si\equiv0$, $\wt R=\wt Q=0,$ $m=n=1,$ $\f^*$ is deterministic, then $\L_3=\L_4\equiv 0$, $P_3^*, P_4^*$ satisfy deterministic backward ODEs. This corresponds to the case in \cite{Hu-Jin-Zhou-2012}, \cite{Hu-Jin-Zhou-2017}. Notice that the randomness of $b,\si$ leads to above BSDEs which appears for the first time to our best.
\er
\br
We look at another case of (\ref{open-loop-closed-loop-representation}) when $A=B=C=D=0$:
$$\left\{
\ba{ll}
\ns\ds d\wt P_1=- \Big[\wt P_1\wt A -Q_1+(\wt P_1\wt B-Q_3)\Th\Big]ds,\ \ \\
\ns\ds d\wt P_2=-\Big\{\wt P_2\wt A+\wt C ^{\top}\wt  P_1\wt C+\wt A^{\top}\wt P_2-Q_2+\wt A^{\top}\wt P_1 +(\wt P_2\wt B-Q_4 +\wt C ^{\top} \wt P_1\wt D)\Th\Big\}ds,\\
\ns\ds d\wt P_{3}=-\Big[\wt A^{\top}\wt P_3+\wt C^{\top}\wt P_1\si+\wt P_2b+(\wt P_2\wt B+\wt C^{\top}\wt P_1\sD-Q_4)\f+\wt A^{\top} \wt P_4+\wt C^{\top}\wt \L_4\Big]ds+\wt \L_{3}dW(s),\\
d\wt P_4=-\Big\{ (\wt P_1\wt B-Q_3) \f+\wt P_1b\Big\}ds+\wt \L_4dW(s),
\\
\ns\ds \wt P_1(T)=-G,\ \wt P_2(T)=-\wt G,\ \wt P_3(T)=0,\ \wt P_4(T)=-\gamma_2,
\ea\right.$$
Here we use the term $\wt P_i$ instead.

$\bullet$ The coefficients of above system of equations relies on the mean-field terms. To our best, such system is new in the related literature.

$\bullet$ We point out one more interesting thing. If we add the first two equations together, we see that
\bel{One-more-interesting-thing}\left\{\ba{ll}
\ns\ds d(\wt P_1+\wt P_2)=-\Big[(\wt P_1+\wt P_2)\wt A+\wt A^{\top}(\wt P_1+\wt P_2)+\wt C^{\top}\wt P_1\wt C-Q-\wt Q+(\wt P_1\wt B+\wt P_2\wt B+\wt C^{\top}\wt P_1\wt D)\Th\Big],\\
\ns\ds \wt P_1(T)+\wt P_2(T)=-(\wt G+G).
\ea\right.
\ee
It is a direct calculation that $(P_1+P_2)$ in (\ref{Special-case-I}) satisfies the same equation as (\ref{One-more-interesting-thing}) if we replace $\wt A,\wt B,\wt C,\wt D$ with $A,B,C,D$. One can also obtain similar conclusion for $(P_3+P_4)$ and $(\wt P_3+\wt P_4)$.
\er
\br
We observe that the introduced terms $\wt B$, $\wt D$ in (\ref{standard-state-equation}) could bring essential influence on $(\Th^*,\f^*)$ in (\ref{representations-for-Th-f}). Here are some special cases.

$\bullet$ In terms of (\ref{representations-for-Th-f}), both $\wt D $ and $D$ play important roles in allowing $R$, $\wt R$ to be indefinite. In other words, if $\wt R=R=D=0$, equilibrium control $u^*$ could still have feedback form under proper conditions.

$\bullet$ Suppose $\sR\geq\d>0$ and $B=D=0$. From (\ref{representations-for-Th-f}) we see that the feedback form could still make sense by imposing suitable conditions on $\wt B$, $\wt D$.

$\bullet$ Suppose $\sR\geq\d>0$, and $B\neq0$ or $D\neq0$. According to (\ref{representations-for-Th-f}), the terms $\wt B$, $\wt D$ could deny the existence of feedback form for $u^*$, if for example, $\wt B:=- B$, $\wt D:=- D$.

\er

\section{Uniqueness of open-loop equilibrium controls}

In this section, we study the uniqueness of open-loop equilibrium controls.

\ms

(H2) Suppose equilibrium control $u^*$ has a representation of $u^*=\Th^*\sX^*+\f^*$, and $\sR -\sD^{\top} P_1^* \sD \geq\d>0$.

\ms

Given $P_i^*$, $(\cM,\cN)$ in (\ref{open-loop-closed-loop-representation}), (\ref{Backward-system-for-cY-cZ}) associated with $u^*(\cd)$, $u(\cd)\in L^2_{\dbF}(0,T;\dbR^m)$, respectively, we define
\bel{definition-for-arbitrary-y-z}
\ba{ll}
\ns\ds \cK(s,t):=\cM(s,t)-\big[P_1^*\cX+P_2^*\dbE_t\cX+\dbE_t P_3^*+P_4^* \big](s),\ \ s\in[t,T],\\
\ns\ds \cH(s):=\cN(s)-\big\{P_1^*(\sC X +\sD u +\si) +\L_4\big\}(s),\ \ s\in[0,T].
\ea
\ee
Recall the definition of $(\cM,\cN)$ in (\ref{Definition-of-cM-cN}), it is easy to see that for $ \cK_{d}(s)\equiv\cK(s,s)$ with $s\in[0,T]$,
\bel{arbitrary-diag-s-Y-Z-difference}\ba{ll}
\ns\ds  \cK_{d}(\cd)\in L^2_{\dbF}(\Omega;C([0,T];\dbR^n)),\ \ \cH(\cd)\in L^2_{\dbF}(0,T;\dbR^n).
\ea\ee
Given $(\bar\cK_d,\bar\cH)$ associated with $\bar u$, and $\cP_i$ in (\ref{Equations-for-P-i-in-uniqueness}), we define
\bel{definitions-of-c-G}\left\{\ba{ll}
\ns\ds \cG_1 :=\cG_3\big[
\sB ^{\top}\bar \cK_d +\sD ^{\top}\bar \cH \big], \ \  \cG_2 := \cG_4\big[
\sB ^{\top}\bar \cK_d +\sD ^{\top}\bar \cH \big],\\
\ns\ds \cG_3:=(C^{\top}P_1^*\sD+P_1^*\sB+C^{\top}\cP_1\wt D+\cP_1\wt B) [\sR- \sD^{\top}P_1^* \sD]^{-1},\\
\ns\ds \cG_4:=(\wt C^{\top}P_1^*\sD+P_2^*\sB-C^{\top}\cP_1\wt D-\cP_1\wt B)[\sR-\sD^{\top}P_1^*\sD]^{-1}.
\ea\right.
\ee
\begin{lemma}\label{Lemma-representation}
Suppose (H1)-(H2) are true and $\bar u(\cd)$ is another equilibrium control. Then
\bel{One-pre-representation-for-bar-u}
\ba{ll}
\ns\ds \bar u(s)=\Th^*(s) \bar \cX(s)+\f^*(s)+\big[\sR(s)-\sD(s)^{\top}P_1^*(s)\sD(s)\big]^{-1}
\big[\sB(s)^{\top}\bar \cK_d(s)+\sD(s)^{\top}\bar \cH(s)\big],\ \ s\in[0,T],
\ea
\ee
where $(\bar\cK,\bar\cH)$ is the solution of the following backward equation
\bel{first-step-equation-for-bar-c-y-c-z}
\left\{\ba{ll}
\ns\ds d\bar\cK=
-\Big\{ A^{\top}\bar\cK+C^{\top} \bar\cH +\wt A^{\top}\dbE_t\bar\cK+\wt C^{\top}\dbE_t\bar\cH+\cG_1+\dbE_t\cG_2\Big\}ds+\bar\cH dW(s),\ \ s\in[t,T],\\
\ns\ds \bar \cY(T,t)=0.
\ea\right.
\ee
\end{lemma}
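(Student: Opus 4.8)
The plan is to prove the two assertions separately: the pointwise representation (\ref{One-pre-representation-for-bar-u}) for $\bar u$, which I would read off from the first-order equilibrium condition of Theorem \ref{Equivalence-open}, and the backward equation (\ref{first-step-equation-for-bar-c-y-c-z}) for $(\bar\cK,\bar\cH)$, which I would obtain by applying It\^o's formula to $\bar\cK=\bar\cM-[P_1^*\bar\cX+P_2^*\dbE_t\bar\cX+\dbE_t P_3^*+P_4^*]$.

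For the representation, since $\bar u$ is an equilibrium control, Theorem \ref{Equivalence-open} supplies the first-order condition (\ref{One-necessary-general-condition-0}) with $(\bar\cM,\bar\cN)$ in place of $(\cM,\cN)$. I would insert the definitions (\ref{definition-for-arbitrary-y-z}), that is $\bar\cM(s,s)=\bar\cK_d+[(P_1^*+P_2^*)\bar\cX+P_3^*+P_4^*]$ and $\bar\cN=\bar\cH+P_1^*(\sC\bar\cX+\sD\bar u+\si)+\L_4^*$, into (\ref{One-necessary-general-condition-0}) and move the $\sD^\top P_1^*\sD\,\bar u$ term to the left. This isolates $[\sR-\sD^\top P_1^*\sD]\bar u$ and leaves on the right exactly the block $[\sB^\top(Y_0\g_1+P_1^*+P_2^*)+\sD^\top P_1^*\sC]\bar\cX$, the inhomogeneous block $\sB^\top(P_3^*+P_4^*)+\sD^\top(P_1^*\si+\L_4^*)$, and the residual $\sB^\top\bar\cK_d+\sD^\top\bar\cH$. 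The defining identities (\ref{One-equality-to-obtain-Th}) and (\ref{One-equality-to-obtain-varphi}) rewrite the first two blocks as $[\sR-\sD^\top P_1^*\sD]\Th^*\bar\cX$ and $[\sR-\sD^\top P_1^*\sD]\f^*$; since (H2) makes $\sR-\sD^\top P_1^*\sD\ge\d>0$ invertible, dividing through gives (\ref{One-pre-representation-for-bar-u}).

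For the backward equation I would differentiate $\bar\cK$ via It\^o's formula, using the BSDE (\ref{Backward-system-for-cY-cZ}) for $\bar\cM$, the equations (\ref{open-loop-closed-loop-representation}) for the deterministic $P_1^*,P_2^*$ and the adapted $(P_3^*,\L_3^*),(P_4^*,\L_4^*)$, and the dynamics (\ref{The-equation-for-cX}) of $\bar\cX$, noting that $\dbE_t\bar\cX$ and $\dbE_t P_3^*$ carry no $dW$ term in the variable $s$. The martingale part then collapses to $\bar\cH$ precisely by the definition of $\bar\cH$ in (\ref{definition-for-arbitrary-y-z}), and the terminal values cancel because $P_1^*(T)=-G,\ P_2^*(T)=-\wt G,\ P_3^*(T)=0,\ P_4^*(T)=-\g_2$ match $\bar\cM(T,t)=-G\bar\cX(T)-\wt G\dbE_t\bar\cX(T)-\g_2$, so $\bar\cK(T,t)=0$. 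Substituting $\bar\cM=\bar\cK+\text{(reference)}$ and the expression for $\bar\cN$ into the drift isolates the linear block $A^\top\bar\cK+C^\top\bar\cH+\wt A^\top\dbE_t\bar\cK+\wt C^\top\dbE_t\bar\cH$, leaving a remainder in $\bar\cX,\dbE_t\bar\cX,b,\si,P_3^*,P_4^*,\L_4^*$ and the control $\bar u$.

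The hard part will be identifying that remainder with $\cG_1+\dbE_t\cG_2$. Here I would exploit the cancellation already established in Lemma \ref{representation-for-Y-Z-*-0-new}: were $\bar u$ the pure feedback $\Th^*\bar\cX+\f^*$, the reference would itself solve (\ref{Backward-system-for-cY-cZ}), so the whole remainder—every $\bar\cX,\dbE_t\bar\cX$ and inhomogeneous term, including the $\cP_1$-contributions carried by the $Q_i$—would vanish identically. Writing $\bar u=\Th^*\bar\cX+\f^*+\Delta$ with $\Delta=[\sR-\sD^\top P_1^*\sD]^{-1}[\sB^\top\bar\cK_d+\sD^\top\bar\cH]$ from (\ref{One-pre-representation-for-bar-u}), only the $\Delta$-proportional part survives; it enters through $\sD^\top P_1^*\sD$ (via $\bar\cN$), through $P_1^*\sB$ and $P_2^*\sB$ (via the drift of the reference), and through the explicit $Q_3\bar u+Q_4\dbE_t\bar u$ of (\ref{Backward-system-for-cY-cZ}). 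Because the control $\bar u(s)$ is evaluated at its own time $s$, the surviving $\Delta(s)$ carries the diagonal $\bar\cK_d(s)=\bar\cK(s,s)$ and $\bar\cH(s)$ rather than the off-diagonal $\bar\cK(s,t)$, which is exactly why $\cG_1,\cG_2$ in (\ref{definitions-of-c-G}) are built from $\sB^\top\bar\cK_d+\sD^\top\bar\cH$. Sorting these into the non-conditional and the $\dbE_t$-conditional groups and inserting $Q_3=-(C^\top\cP_1\wt D+\cP_1\wt B)=-Q_4$ from (\ref{Definitions-of-Q-1--4}) should reproduce the matrices $\cG_3,\cG_4$ acting on $\sB^\top\bar\cK_d+\sD^\top\bar\cH$, i.e. $\cG_1+\dbE_t\cG_2$. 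The delicate bookkeeping is this mean-field split—deciding which $\Delta$-terms are hit by $\dbE_t$ and thus land in $\cG_2$ rather than $\cG_1$—together with verifying that the $P_i^*$- and $\cP_1$-pieces combine with the signs recorded in (\ref{definitions-of-c-G}).
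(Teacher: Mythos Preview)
Your proposal is correct and follows the paper's approach almost exactly; the only difference is order—the paper first derives, for an arbitrary control $u$, the intermediate equation (\ref{equation-for-arbitrary-y-z}) for $(\cK,\cH)$ with inhomogeneity already factored as one matrix times $[\Th^*\cX+\f^*-u]$ plus another times $\dbE_t[\Th^*\cX+\f^*-u]$, then reads off (\ref{One-pre-representation-for-bar-u}) from the first-order condition exactly as you do, and finally substitutes back to obtain (\ref{first-step-equation-for-bar-c-y-c-z}). Your invocation of Lemma \ref{representation-for-Y-Z-*-0-new} as a conceptual shortcut for why only the $\Delta$-part survives is sound intuition, but note that lemma concerns $\cX$ driven by the feedback itself rather than by $\bar u$, so to make the cancellation rigorous you will in effect carry out the same direct It\^o bookkeeping the paper performs.
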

\begin{proof}
By It\^{o}'s formula, we have
$$\left\{
\ba{ll}
\ns\ds d\[P_1^* \cX\]=
\Big\{-A^{\top}\[P_1^*\cX\]-C^{\top}P_1^*(\sC+\sD\Th^*)\cX+Q_1\cX+Q_3\Th^*\cX
-P_1^*\sB\big[\Th^* \cX-u\big]\\
\ns\ds\qq\qq\q +P_1^*b\Big\}ds+ P_1^*(\sC \cX+\sD u+\si) dW(s),\\
\ns\ds d\big[P_2^* \dbE_t \cX\big]=\Big\{-\sA ^{\top} P_2^* \dbE_t \cX
- P_2^* B\dbE_t(\Th^* \cX-u)+P_2^* \dbE_t b+  (Q_2+Q_4\Th^*)\dbE_t \cX\\
\ns\ds\qq\qq \q -\big[\wt A^{\top}P_1^*+\wt C^{\top}P_1^*(\sC+\sD\Th^*)\big]\dbE_t\cX\Big\}ds.
\ea\right.
$$
Recall $P_i^*$ in (\ref{open-loop-closed-loop-representation}), we see that
$$
\ba{ll}
\ns\ds d\[P_1^*\cX+P_2^*\dbE_t \cX +\dbE_tP_3^*+P_4^*\]\\
\ns\ds =\Big\{-A^{\top}\Big[P_1^*\cX+P_2^*\dbE_t\cX+
\dbE_tP_3^*+P_4^*\Big]-\wt A^{\top}\Big[P_1^*\dbE_t\cX+P_2^*\dbE_t\cX+
\dbE_tP_3^*+\dbE_tP_4^*\Big]
\\
\ns\ds\qq-C^{\top}\Big[P_1^*\big[\sC\cX+\sD u+\si\big]+\L_4^*\Big]
-\wt C^{\top}\Big[P_1^*\dbE_t\big[\sC\cX+\sD  u+ \si\big]+ \L_4^*\Big]\\
\ns\ds\qq -\big[C^{\top}P_1^*\sD+P_1^*\sB\big](\Th^*\cX+\f^*-u)-\big[\wt C^{\top}P_1^*\sD+P_2^*\sB\big]\dbE_t(\Th^*\cX+\f^*-u)\\
\ns\ds\qq +Q_1\cX+Q_2\dbE_t\cX+Q_3(\Th^* \cX+\f^*)+Q_4(\Th^*\dbE\cX+\dbE_t\f^*)\Big\}ds\\
\ns\ds\qq +\big[P_1^*(\sC\cX+\sD u+\si)+\L_4^*\big] dW(s).
\\
\ea
$$
Observe that $(\cM,\cN)$ satisfies the backward equation in (\ref{Backward-system-for-cY-cZ}), thus we can obtain the following equation with respect to $(\cK,\cH)$,
\bel{equation-for-arbitrary-y-z}
\ba{ll}
\ns\ds d\cK=\Big\{-A^{\top}\cK-C^{\top} \cH-\wt A^{\top}\dbE_t\cK-\wt C^{\top} \dbE_t\cH + (C^{\top}P_1^*\sD+P_1^*\sB+C^{\top}\cP_1\wt D+\cP_1\wt B)[\Th^*\cX+\f^*-u]\\
\ns\ds\qq\ + \big[\wt C^{\top}P_1^*\sD +P_2^* \sB-C^{\top}\cP_1\wt D-\cP_1\wt B\big] \dbE_t\big[(\Th^*\cX+\f^*-u)\big]\Big\}ds+\cH dW(s).
\ea
\ee
From Theorem \ref{Equivalence-open} and (\ref{definition-for-arbitrary-y-z}), (\ref{One-equality-to-obtain-Th}), (\ref{One-equality-to-obtain-varphi}), if $\bar u(\cd)$ is an equilibrium control, then
$$
\ba{ll}
\ns\ds 0=\big[\sR \bar u- \sB^{\top}\bar \cM-\sD^{\top}\bar \cN- \sB^{\top}Y_0\gamma_1 \bar\cX\big] \\
\ns\ds\q =- \sB ^{\top}\bar\cK_d -\sD ^{\top}\bar \cH -\big[\sB^{\top}(P_1^*+P_2^* +Y_0\gamma_1)+\sD^{\top}P_1^*\sC\big] \bar\cX  \\
\ns\ds\qq -\big[\sB^{\top}(P_3^*+ P_4^*)+\sD^{\top}[P_1^*\si+ \L_4^*]\big] +(\sR-\sD^{\top}P_1^*\sD) \bar u \\
\ns\ds\q =- \sB ^{\top}\bar\cK_d -\sD ^{\top}\bar \cH -(\sR-\sD^{\top}P_1^*\sD) \big[\Th^* \bar\cX +\f^*-\bar u\big],
\ea
$$
from which  we obtain (\ref{One-pre-representation-for-bar-u}). Plugging it into (\ref{equation-for-arbitrary-y-z}) associated with $\bar u$, we obtain (\ref{first-step-equation-for-bar-c-y-c-z}).
\end{proof}
The following result shows more explicit structure of solution for (\ref{first-step-equation-for-bar-c-y-c-z}).
\begin{lemma}\label{Decoupling-mean-field}
For any $t\in[0,T)$, we consider BSDEs of
\bel{Conditional-MF-BSDEs}\left\{
\ba{ll}
\ns\ds dY=-\big[A^{\top}Y+C^{\top}Z+\wt A\dbE_t Y+\wt C\dbE_tZ+Q_1+\dbE_t Q_2\big]ds+Z dW(s),\ \ s\in[t,T],\\
\ns\ds dY_1=-\Big[A^{\top}Y_1+C^{\top}Z_1+Q_1\Big]ds+Z_1dW(s),\ \ s\in[0,T], \\
\ns\ds dY_2=-\Big\{(A^{\top}+\wt A^{\top})Y_2+\wt A^{\top}Y_1+\wt C^{\top}Z_1+Q_2\Big\}ds+Z_2dW(s),\ \ s\in[0,T],\\
\ns\ds Y(T)=G_1+\dbE_t G_2,\ \ Y_1(T)=G_1,\ \ Y_2(T)=G_2,
\ea\right.
\ee
where
$$A, C,\wt A,\wt C\in L^{\infty}(0,T;\dbR^{n\times n}),\ \ Q_1, Q_2\in L^2_{\dbF}(\Omega;L^1(0,T;\dbR^{n\times n})),\ \ G_1, G_2\in L^{\infty}_{\cF_T}(\Omega;\dbR^{n\times n}).$$
Then for any $t\in[0,T)$, the first equation in (\ref{Conditional-MF-BSDEs}) admits a unique pair of solution
$(Y,Z)\in L^2_{\dbF}(\Omega;C([t,T];\dbR^n))\times L^2_{\dbF}(t,T;\dbR^n)$ such that
$$
Y(s,t)=Y_1(s)+\dbE_t Y_2(s),\ \ \forall s\in[t,T], \ \ Z(r,t)=Z_1(r),\ \ r\in[t,T]. \ \ a.e.
$$
\el

\begin{proof}
Under given requirements, it is easy to see the unique solvability of $(Y_i,Z_i)$ satisfying
$$
(Y_i,Z_i)\in L^2_{\dbF}(\Omega;C([0,T];\dbR^n))\times L^2_{\dbF}(0,T;\dbR^n),\ \ i=1,2.
$$
We define $
Y:=Y_1+\dbE_t Y_2,$ $Z:=Z_1.
$
It is easy to verify that $(Y,Z)$ satisfy equation (\ref{Conditional-MF-BSDEs}).
To verify the uniqueness, let us suppose that $(Y',Z')$ is another pair of solution. Let $M:= Y'-Y,$ $N:=Z'-Z$. Then
\bel{MF-BSDE-zero-solution}\left\{
\ba{ll}
\ns\ds dM=-\big[A^{\top}M+C^{\top}N+\wt A^{\top}\dbE_tM+C^{\top}\dbE_t N\big]ds+N dW(s),\ \ s\in[t,T],\\
\ns\ds M(T)=0.
\ea\right.
\ee
As to (\ref{MF-BSDE-zero-solution}), the standard mean-field BSDEs theory shows the unique solvability of $(M,N)\equiv(0,0)$. Then our conclusion is easy to see.
\end{proof}
At this moment, we are ready to give the uniqueness of open-loop equilibrium control.

\bt
Given equilibrium controls $\bar u$ and $u^*$ such that (H1)-(H2) are true. Then $\bar u(\cd)=u^*(\cd)$.
\et

\begin{proof}
According to Lemma \ref{Lemma-representation}, we only need to prove $(\bar \cK_d,\bar \cH)\equiv (0,0)$. To this end,  given $(\bar \cK_d,\bar \cH )$ satisfying (\ref{arbitrary-diag-s-Y-Z-difference}), $\cG_i$ in (\ref{definitions-of-c-G}), for $s\in[0,T]$ let us consider
$$
\left\{\ba{ll}
\ns\ds dY_1(s)=-\Big[A^{\top}Y_1 +C^{\top} Z_1
+ \cG_1\Big]ds+Z_1 dW(s), \\
\ns\ds dY_2(s)=-\big[(A^{\top}+\wt A^{\top})Y_2+\wt A^{\top}Y_1+\wt C^{\top}Z_1+ \cG_2 \big]ds+Z_2(s)dW(s),\\
\ns\ds Y_1(T)=0,\  \ Y_2(T)=0,
\ea\right.
$$
the solvability of which is easy to see.
By Lemma \ref{Decoupling-mean-field}, for any $t\in[0,T]$, the following is true,
$$\ba{ll}
\ns\ds \dbP\big\{\omega\in\Omega;\bar\cK(s,t)=Y_1(s)+\dbE_t Y_2(s),\ \forall s\in[t,T]\big\}=1, \ \
\dbP\big\{\omega\in\Omega; \bar\cH(s)=Z_1(s)\big\}=1,\ \  s\in[t,T]. \ a.e.
\ea
$$
Therefore, one has
$$
\ba{ll}
\ns\ds \dbP\big\{\omega\in\Omega;\bar\cK_d(t)\equiv\bar\cK(t,t)=Y_1(t)+ Y_2(t)\big\}=1, \ \ \forall t\in[0,T].
\ea
$$
Plugging this result into the equations of $(Y_i,Z_i)$, we have
\bel{system-of-BSDEs}
\left\{\ba{ll}
\ns\ds d Y_1=-\big[\dbA_1Y_1+\dbC_1 Z_1+\dbB_1 Y_2\big]ds+Z_1dW(s),\\
\ns\ds d Y_2=-\big[\dbA_2Y_1+\dbC_2 Z_1+\dbB_2 Y_2\big]ds+Z_2dW(s),\\
\ns\ds Y_1(T)=0,\ Y_2(T)=0,
\ea\right.
\ee
where $\cG_3$, $\cG_4$ are bounded and defined in (\ref{definitions-of-c-G}), and
$$
\ba{ll}
\ns\ds \dbA_1:= A^{\top}+\cG_3\sB^{\top},\ \
\dbC_1:=C^{\top}+\cG_3\sD^{\top}, \ \ \dbB_1:=\cG_3 \sB^{\top},\\
\ns\ds \ \ \dbA_2:=\wt A^{\top}+\cG_4 \sB^{\top}, \ \ \dbB_2:=\sA+\cG_4 \sB^{\top},\ \ \dbC_2:=\cG_4 \sD^{\top}.
\ea
$$
If we denote by $\dbY:=(Y_1,Y_2)^{\top}$, $\dbZ:=(Z_1,Z_2)^{\top}$, and
$$
\ba{ll}
\ns\ds \cA:= \left[\ba{ll}
\ns\ds \dbA_1, \qq \dbB_1 \\
\ns\ds \dbA_2,\qq \dbB_2\\
\ea\right],\ \  \cC:= \left[\ba{ll}
\ns\ds \dbC_1,\qq 0\\
\ns\ds \dbC_2,\qq 0\\
\ea\right],
\ea
$$
we can rewrite (\ref{system-of-BSDEs}) as
$$
\ba{ll}
\ns\ds d\dbY(s)=\big[\cA(s) \dbY(s)+\cC(s)\dbZ(s)\big]ds+\dbZ(s)dW(s),\ \ \dbY(T)=0.
\ea
$$
Since $\cA(\cd)$, $\cC(\cd)$ are  bounded and deterministic, hence $\dbY(\cd)\equiv0$, and thus $\bar \cK_d(\cd)=\bar \cH(\cd)\equiv0.$
\end{proof}
\br
The uniqueness of open-loop equilibrium in Markovian setting was also studied in Section 4 of \cite{Hu-Jin-Zhou-2017}. In contrast, we obtain the similar uniqueness conclusions by a different approach under the general mean-field framework.
\er

\section{Concluding remarks}

In this paer, a class of time inconsistent stochastic linear quadratic problems is discussed where the state equation is described by a controlled linear conditional mean-field stochastic differential equations (SDEs). Since the mean-field terms in state equations also lead to time inconsistency, both open-loop equilibrium controls and their closed-loop representations have to be redefined in new manners. The characterizations are established for previous two notions and several new features are revealed as well. An interesting result, i.e., Lemma \ref{Lemma-equality}, and several remarks in Section 3 are given to explain the essential difference with the particular case of controlled SDEs. The relevant study on closed-loop equilibrium controls/strategies and related Riccati equations is much more complicated, and we hope to discuss it in our future publications.

%

%
%
%
%

\end{document}